\newcommand{\R}{\mathbb{R}}
\newcommand{\RN}{{\mathbb{R}^N}}
\newcommand{\RD}{{\mathbb{R}^2}}
\renewcommand{\le}{\leslant}
\renewcommand{\ge}{\geslant}
\renewcommand{\l }{\lambda}
\newcommand{\n }{\nabla }
\renewcommand{\t}{\theta}
\renewcommand{\L}{\Lambda}
\newcommand{\Hr}{H^1_r(\RD)}
\newcommand{\N}{\mathbb{N}}
\renewcommand{\o}{\omega}
\newcommand{\irn }{\int_{\RN}}
\newcommand{\ird}{\int_{\RD}}
\def\bbm[#1]{\mbox{\boldmath $#1$}}
\newcommand{\beq }{\begin{equation}}
\newcommand{\eeq }{\end{equation}}
\renewcommand{\le}{\leqslant}
\renewcommand{\ge}{\geqslant}
\newcommand{\dis}{\displaystyle}
\providecommand\@dotsep{5}
\def\listtodoname{List of Todos}
\def\listoftodos{\@starttoc{tdo}\listtodoname}
\numberwithin{equation}{section}
\newtheorem{theorem}{Theorem}[section]
\newtheorem{lemma}[theorem]{Lemma}
\newtheorem{proposition}[theorem]{Proposition}
\newtheorem{cor}[theorem]{Corollary}
\newtheorem{remark}[theorem]{Remark}
\title[Chern-Simons-Schr\"odinger equation]{A multiplicity result for Chern-Simons-Schr\"odinger equation with a general nonlinearity}
\author[P. L. Cunha]{Patricia L. Cunha}
\author[P. d'Avenia]{Pietro d'Avenia}
\author[A. Pomponio]{Alessio Pomponio}
\author[G. Siciliano]{Gaetano Siciliano}
\address[P. L. Cunha]{\newline\indent
Departamento de Inform\'atica e M\'etodos Quantitativos
\newline\indent
Funda\c c\~ao Getulio Vargas
\newline\indent
Av. Nove de Julho, 2029 - Bela Vista
\newline\indent
01313-902, S\~ao Paulo - SP, Brazil.}
\email{\href{mailto:patcunha80@gmail.com}{patcunha80@gmail.com}}
\address[P. d'Avenia]{\newline\indent
Dipartimento di Meccanica, Matematica e Management 
\newline\indent
Politecnico di Bari
\newline\indent
Via E. Orabona 4
\newline\indent
70125 Bari, Italy.}
\email{\href{mailto:pietro.davenia@poliba.it}{pietro.davenia@poliba.it}}
\address[A. Pomponio]{\newline\indent
Dipartimento di Meccanica, Matematica e Management 
\newline\indent
Politecnico di Bari
\newline\indent
Via E. Orabona 4
\newline\indent
70125 Bari, Italy.}
\email{\href{mailto:alessio.pomponio@poliba.it}{alessio.pomponio@poliba.it}}
\address[G. Siciliano]{\newline\indent
Departamento de Matem\'atica
\newline\indent
Universidade de S\~ao Paulo
\newline\indent
Rua do Mat\~ao, 1010 - Cidade Universitaria
\newline\indent
05508-090, S\~ao Paulo - SP, Brazil.}
\email{\href{mailto:sicilian@ime.usp.br}{sicilian@ime.usp.br}}
\thanks{P. d'Avenia and A. Pomponio are supported by GNAMPA project  ``\emph{Aspetti differenziali e 
geometrici nello studio di problemi  ellittici quasilineari}''.
G. Siciliano is supported by Fapesp (SP) and CNPq, Brazil.}
\subjclass[2010]{35J20, 35Q55, 81T10}
\date{\today}
\keywords{Chern-Simons gauge field,
Schr\"odinger equation, Variational methods, Radial solutions, General nonlinearities.}
\begin{document}

\pretolerance10000 

\begin{abstract}
In this paper we give a multiplicity result for the following Chern-Simons-Schr\"{o}dinger equation
\begin{equation*}
-\Delta u+2q u \int_{|x|}^{\infty}\frac{u^{2}(s)}{s}h_u(s)ds  +q u\frac{h^{2}_u(|x|)}{|x|^{2}} = g(u), \quad\hbox{in }\RD,
\end{equation*}
where $\displaystyle h_u(s)=\int_0^s \tau u^2(\tau) \ d \tau$, under very general assumptions on the nonlinearity $g$. 
In particular, for every $n\in \mathbb N$, we prove the existence of (at least) $n$ distinct solutions, 
for every $q\in (0,q_{n})$, for a suitable $q_n$. 
\end{abstract}

\maketitle

\section{Introduction}

This paper is devoted to the study of the following Chern-Simons-Schr\"{o}dinger equation
\begin{equation}\label{corretta}
-\Delta u+2q u \int_{|x|}^{\infty}\frac{u^{2}(s)}{s}h_u(s)ds  +q u\frac{h^{2}_u(|x|)}{|x|^{2}} = g(u) \quad\hbox{in }\RD,
\end{equation}
where $u:\mathbb R^{2}\to \mathbb R$, $q>0$
and $ h_u(s)=\int_0^s \tau u^2(\tau) \ d \tau$. This equation describes nonrelativistic matter interacting with Chern-Simons gauge 
fields in the plane, under a suitable ansatz.
For the reader convenience, in Section \ref{se:deduction}, we will give the physical motivations of this model, the derivation and 
justification of \eqref{corretta}.  

Arguing as in \cite[Proposition 2.2]{BHS}, solutions of \eqref{corretta} are critical points of the $C^1$-functional 
\begin{equation}\label{J}
J_q(u)=\frac{1}{2}\ird |\nabla u|^{2} \ dx+\frac{q}{2}\int_{\mathbb R^{2}}
\frac{u^{2}(|x|)}{|x|^{2}} \Big(\int_{0}^{|x|}s u^{2}(s)ds\Big)^{2}dx- \int_{\mathbb R^{2}}G(u)dx
\end{equation}
defined on $H_r^1(\RD)=\{u\in H^1(\RD): u \hbox{ is radially symmetric}\}$, where $G(s)=\int_{0}^{s}g(\tau)d\tau$. 
As we can see, the functional includes a nonlocal term which requires a careful analysis.

Our aim is to study \eqref{corretta} with a Berestycki, Gallou\"et \& Kavian type nonlinearity \cite{BGK}, 
which satisfies very general and almost necessary conditions, and also it is the planar version of the Berestyski-Lions type 
nonlinearity \cite{BL1,BL2}. These papers concern with the existence of nontrivial radial  solutions for the following autonomous nonlinear 
field equation
\beq \label{eq:BL}	
\left\{\begin{array}{cl}
-\Delta u= g(u) & \hbox{in }\RN,
\\
u\in H^1(\RN),
\end{array}
\right.
\eeq
with $N\ge 2$, by assuming 
\begin{enumerate}[label=(g\arabic*),ref=g\arabic*,start=0]
\item \label{vecchiaitg0}
$g\in C(\mathbb R, \mathbb R)$ is an odd function.
\item \label{vecchiaitg1}
For $N\ge 3$, 
$$ \limsup_{\xi\to\infty}\frac{g(\xi)}{\xi^{\frac{N+2}{N-2}}}\le0.$$ 
For $N=2$, 
$$ \limsup_{\xi\to\infty}\frac{g(\xi)}{e^{\alpha\xi^{2}}}\le 0,\ \ \forall\, \alpha>0.$$  
\item \label{vecchiaitg2}
For $N\ge 3$, 
\[
\displaystyle-\infty<\liminf_{\xi\to 0}\frac{g(\xi)}{\xi} \le \limsup_{\xi\to 0}\frac{g(\xi)}{\xi}<0.
\]
For $N=2$,
\[
\displaystyle-\infty<\lim_{\xi\to 0}\frac{g(\xi)}{\xi} <0.
\]
\item \label{vecchiaitg3}
There exists $\zeta_{0}>0$ such that $G(\zeta_{0})>0$.
\end{enumerate}

As we can see, there is a difference in the assumption \eqref{vecchiaitg2}  between the cases $N\ge 3$ and $N=2$. 
The existence of a limit $\lim_{\xi \to 0} g(\xi)/\xi \in (-\infty, 0)$, when $N=2$, is essential in 
the arguments of Berestycki, Gallou\"et \& Kavian \cite{BGK} to prove the Palais-Smale ((PS) from now on) condition for the corresponding functional under suitable constraint.

Later on, in a more recent paper, Hirata, Ikoma \& Tanaka \cite{HIT} consider equation \eqref{eq:BL} in the case $N\ge 2$ 
assuming \eqref{vecchiaitg0}, \eqref{vecchiaitg1}, \eqref{vecchiaitg3} and 

\begin{enumerate}[label=(g\arabic*'),ref=g\arabic*',start=2]
\item \label{vecchiaitg2'}
$\displaystyle-\infty<\liminf_{\xi\to 0}\frac{g(\xi)}{\xi} \le \limsup_{\xi\to 0}\frac{g(\xi)}{\xi}<0$
\end{enumerate}
and find radial solutions as critical points of the functional 
\[ 
I(u)=\frac{1}{2}\irn |\n u |^2\ dx - \irn G(u)\ dx.
\]
Following the approach introduced by Jeanjean in \cite{J}, Hirata, Ikoma \& Tanaka \cite{HIT} consider 
the auxiliary functional 
$\tilde{I}:\R\times H_r^1(\RN) \to \R$
\[
\tilde{I}(\t,u)=\frac{e^{(N-2)\t}}{2}\irn |\n u |^2\ dx - e^{N\t} \irn G(u)\ dx.
\]
In this way they are able to find a (PS) sequence $(\t_j,u_j)$ such that $\t_j\to 0$ and $u_j$ ``almost'' satisfies the
Poho\v{z}aev identity associated to \eqref{eq:BL}. Using this extra information, it is proved that problem \eqref{eq:BL} 
possesses a positive least energy solution and infinitely many (possibly sign changing) radially symmetric solutions.

%
We prove
\begin{theorem}\label{Main}
	Assume \eqref{vecchiaitg0}, \eqref{vecchiaitg1}, \eqref{vecchiaitg2'} and \eqref{vecchiaitg3}. For every $n\in \mathbb N$ there exists $q_{n}>0$ 
	such that, for every $q\in (0,q_{n})$, equation \eqref{corretta} admits (at least) $n$ distinct solutions.
\end{theorem}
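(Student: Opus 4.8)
The argument is a perturbation from the case $q=0$, in which \eqref{corretta} is precisely \eqref{eq:BL} with $N=2$, so that the multiplicity theorem of Hirata, Ikoma \& Tanaka \cite{HIT} applies. Writing the nonlocal term as
\[
K(u)=\ird\frac{u^{2}(|x|)}{|x|^{2}}\Big(\int_{0}^{|x|}s\,u^{2}(s)\,ds\Big)^{2}dx\ge0,
\]
so that $J_q=J_0+\tfrac{q}{2}K$ with $J_0$ the Berestycki-Lions functional treated in \cite{HIT}, the plan is the following. First note that, by \eqref{vecchiaitg0}, every $J_q$ is even, and that $K$ is positively homogeneous of degree six in $u$; the $\Z_2$-invariance is the mechanism producing several solutions, while the clean homogeneity of $K$ simplifies all the energy bookkeeping. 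I would then construct, for each $q\ge0$, a nondecreasing sequence of symmetric minimax values $c_1(q)\le c_2(q)\le\cdots$ which are critical values of $J_q$, show $c_k(q)\to c_k(0)$ as $q\to0^+$, and use that the unperturbed values $c_k(0)$ of \cite{HIT} are positive and diverge to separate the first $n$ of them for $q$ small; since the separation gets harder as $k$ grows, the admissible threshold $q_n$ will naturally shrink with $n$.

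Since \eqref{vecchiaitg2'} prescribes only the $\liminf$ and $\limsup$ of $g(\xi)/\xi$ at the origin, and not a genuine limit as would be needed to verify (PS) directly in the spirit of \cite{BGK}, I would follow the device of Jeanjean \cite{J} employed in \cite{HIT} and work with the augmented functional $\tilde J_q:\R\times\Hr\to\R$, $\tilde J_q(\t,u)=J_q\big(u(e^{-\t}\cdot)\big)$. A scaling computation in dimension two -- where the Dirichlet integral is invariant, $\ird G(u)$ scales like $e^{2\t}$ and $K$ like $e^{4\t}$ -- gives
\[
\tilde J_q(\t,u)=\frac12\ird|\n u|^{2}\,dx+\frac{q}{2}\,e^{4\t}K(u)-e^{2\t}\ird G(u)\,dx,
\]
which is of class $C^1$ and even in $u$. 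Its symmetric minimax levels over $\R\times\Hr$ produce, after rescaling, critical points of $J_q$, while every (PS) sequence $(\t_j,u_j)$ now also satisfies $\partial_\t\tilde J_q(\t_j,u_j)\to0$, i.e.\ the almost-Pohozaev relation $q\,e^{4\t_j}K(u_j)-e^{2\t_j}\ird G(u_j)\,dx\to0$; this extra identity is exactly what restores compactness under the weak hypothesis \eqref{vecchiaitg2'}.

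For the minimax scheme I would take the genus-based classes $\Gamma_k$ used in \cite{HIT} for $J_0$ and set $c_k(q)=\inf_{A\in\Gamma_k}\max_{(\t,u)\in A}\tilde J_q(\t,u)$. Because $K\ge0$ one has $\tilde J_q\ge\tilde J_0$, hence $c_k(q)\ge c_k(0)$; conversely, evaluating $\tilde J_q$ on a fixed near-optimal compact symmetric family for $c_k(0)$ and using that $K$ is continuous on $\Hr$ (so bounded on that family) yields $c_k(q)\le c_k(0)+C_k\,q$. Thus $c_k(q)\to c_k(0)$ as $q\to0^+$, and since the $c_k(0)$ form a positive sequence diverging to $+\infty$, for each fixed $n$ one can choose $q_n>0$ so that $c_1(q),\dots,c_n(q)$ are $n$ mutually distinct positive numbers for all $q\in(0,q_n)$. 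By the standard $\Z_2$-equivariant minimax theory each $c_k(q)$ is then a critical value of $\tilde J_q$, and the $n$ distinct values furnish $n$ distinct pairs of critical points (a coincidence of levels, were it to occur, would only improve the count through the genus of the critical set).

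The technical core, and the step I expect to be the main obstacle, is the (PS) analysis for $\tilde J_q$. Given a (PS) sequence $(\t_j,u_j)$ at a level $c_k(q)$, I would first prove boundedness in $\R\times\Hr$: the level bound, the relation $\langle\partial_u\tilde J_q,u_j\rangle\to0$ and the almost-Pohozaev identity together control $\|\n u_j\|_{2}$, $K(u_j)$ and $\ird G(u_j)\,dx$, and confine $\t_j$ to a bounded set, so that along a subsequence $\t_j\to\t_*$ and $u_j\weakto u$ in $\Hr$. The subcritical term $\ird G(u_j)$ passes to the limit by the compact radial embedding $\Hr\hookrightarrow L^{p}(\RD)$, $p>2$, after controlling growth through \eqref{vecchiaitg1} and \eqref{vecchiaitg2'}. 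The delicate point is the nonlocal functional $K$: one must show that it is weakly lower semicontinuous and that no mass of the associated integrals escapes to the origin or to infinity along $(u_j)$, so that the nonlocal contributions converge and strong convergence $u_j\to u$ in $\Hr$ follows; here the decay of radial functions and the regularizing effect of the cumulative integral $\int_0^{|x|}s\,u^2\,ds$ (which vanishes quadratically at the origin) are essential. Once strong convergence is established, $(\t_*,u)$ is a critical point of $\tilde J_q$, and the $u$-component of the critical-point equation shows that the rescaled function $u(e^{-\t_*}\cdot)$ solves \eqref{corretta}; the $n$ distinct levels $c_1(q),\dots,c_n(q)$ thereby yield $n$ distinct solutions, completing the proof.
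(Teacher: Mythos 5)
Your proposal works with the untruncated functional $J_q=J_0+\tfrac q2 K$ and asserts that, for a (PS) sequence of the augmented functional, ``the level bound, the relation $\langle\partial_u\tilde J_q,u_j\rangle\to0$ and the almost-Pohozaev identity together control $\|\n u_j\|_2$, $K(u_j)$ and $\ird G(u_j)\,dx$.'' This is the genuine gap: it is false as stated, and it is precisely the obstruction the paper is built to circumvent. For the untruncated $\tilde J_q(\t,u)=\tfrac12\|\n u\|_2^2+\tfrac q2 e^{4\t}K(u)-e^{2\t}\ird G(u)\,dx$ one computes
\begin{equation*}
2\tilde J_q(\t_j,u_j)-\partial_\t\tilde J_q(\t_j,u_j)=\|\n u_j\|_2^2-q\,e^{4\t_j}K(u_j),
\end{equation*}
so the nonlocal term enters the Pohozaev-type combination with the \emph{unfavorable} sign: you obtain $\|\n u_j\|_2^2=2c_k(q)+q\,e^{4\t_j}K(u_j)+o(1)$, which is a lower bound, not an upper bound, and no other combination helps because $g$ satisfies no Ambrosetti--Rabinowitz-type condition that would let you exploit $\langle\partial_u\tilde J_q,u_j\rangle$. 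Boundedness of (PS) sequences therefore does not follow, and the subsequent compactness discussion has nothing to stand on. (A secondary symptom of the same problem: since $K(u(\cdot/\theta))=\theta^4K(u)$ beats $\theta^2\ird G(u)$, the dilated boundary maps $\gamma_n(\sigma)=\pi_n(\sigma)(\cdot/\theta)$ give $J_q\to+\infty$ as $\theta\to\infty$, so even the symmetric mountain-pass geometry for $J_q$ requires a quantitative smallness of $q$ that you do not supply.)

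The paper's route is structurally different exactly here. It replaces $J_q$ by the penalized functional $\mathcal J_q(u)=\tfrac12\|\n u\|_2^2+\tfrac q2\varphi(qN(u))N(u)-\ird G(u)\,dx$ with a cutoff $\varphi$ supported in $[0,2]$ (the Berti--Bolle / Jeanjean--Le Coz device), so that the truncated nonlocal contribution to the identity above is bounded by an absolute constant and $\|\n u_j\|_2^2\le C(n)$ follows; a comparison functional $\mathcal I\le\mathcal J_q$ built from $\bar\Lambda$ gives the divergence of the levels. Even then, the $L^2$-bound on the (PS) sequence, \emph{uniformly in $q$}, is the hardest step and is proved by a contradiction/diagonalization argument (rescaling $\widehat u_j=\tilde u_j(\cdot/t_j)$ with $t_j=1/\|\tilde u_j\|_2$ and showing $\widehat u_j\to0$ in $L^2$, contradicting $\|\widehat u_j\|_2=1$); your proposal does not address this at all, and it cannot be waved away because \eqref{vecchiaitg2'} gives no genuine limit of $g(\xi)/\xi$ at $0$. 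Finally, the uniform bound $\|u^{(n,q)}\|\le C(n)$ is what shows $qN(u^{(n,q)})\le Cq\|u^{(n,q)}\|^6<1$ for $q<q_n$, so that the truncation is inactive and the critical points of $\mathcal J_q$ actually solve \eqref{corretta}. Without the truncation you lose the boundedness; with it you must add the removal step. Your level-separation and weak-continuity remarks for $N$ are fine, but the core of the theorem is missing.
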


Of course the solutions will appear in pairs, since the functional is even. 
Moreover as in \cite[Proposition 2.2]{BHS} it can be seen that the solutions are classical.

Due to the presence of the nonlocal term, the method in \cite{HIT} seems to be not sufficient and so we have to combine it with a penalization 
technique introduced in \cite{BB,JC}. A similar approach has been used in \cite{ADP2} to the study a perturbed version of \eqref{eq:BL}, in 
the case $N\ge 3$, introducing a family of auxiliary functionals suitably related to the ``original'' one. In our case, we consider the same 
family of functionals but we cannot repeat the same arguments.

One of the main difficulties and one of the main differences regarding the previous works is the proof of a uniform boundedness in $\Hr$ 
of a suitable (PS) sequence $(u_j)_j$ with respect to $q$, at least for small positive $q$. In \cite{ADP2} the key point is to show a 
suitable bound of $(u_j)_j$ in $D^{1,2}(\RN)$, and then the boundedness of the same sequence in $L^2(\RN)$ follows by the continuous embedding 
of $D^{1,2}(\RN)$ into $L^{2^*}(\RN)$, where $N\ge 3$ and $2^*=2N/(N-2)$. In two dimensional case,  however, due to the lack of embedding theorems 
of $D^{1,2}(\RD)$ into $L^p$-spaces, these arguments do not work. We get a such uniform boundedness using an appropriate diagonalization argument.
Moreover, while in \cite{ADP2},  the hypothesis  \eqref{vecchiaitg2} guarantees suitable compactness properties, in our case the assumption 
\eqref{vecchiaitg2'} creates several problems with the lack of compactness. This is also the reason why our problem does not fall even into the case 
considered in \cite{JS}.

We recall now some recent results concerning the equation under study.
The case  $g(u)=|u|^{p-1}u-\o u$, where $p>1$ and $\o>0$ is the phase of the standing wave $\psi(t,x)=u(x)e^{i\omega t}$,
has been considered by several authors showing how the existence of solutions and the geometrical aspect of the Euler-Lagrange 
functional associated to \eqref{corretta} depend strongly on $p$ and  $\o$.
In \cite{BHS}, Byeon, Huh \& Seok  show that, for $p\geq3$,  the Euler-Lagrange functional  is unbounded from below and exhibits a mountain-pass geometry for $p>3$.  However the existence of a  solution is not so direct, since for $p \in (3,5)$ the (PS) condition is not known to hold. This problem is bypassed  by using a constrained minimization taking into account the Nehari and Poho\v{z}aev identities. In the special case $p=3$, with a suitable choice of physical constants, they prove that solutions can be found by passing to a self-dual equation, which leads to a Liouville equation that can be solved explicitly. Those are the unique positive solutions. Finally, in the same paper, for $p\in(1,3)$, solutions are found as minimizers on a $L^2$-sphere: the value $\o$ comes out as a Lagrange multiplier and it is not controlled. \\
Later, the result for $p \in  (1,3)$ has been extended by Pomponio \& Ruiz \cite{PR1} 
by investigating the geometry of the Euler-Lagrange  functional. 
Through a careful analysis for a limit equation, they showed that there exist $0 < \o_0 < \tilde \o < \bar \o$ such that if $\o> \bar \o$, 
the unique solutions to \eqref{corretta} are the trivial ones; if $\tilde \o < \o < \bar \o$, there are at least two positive solutions to 
\eqref{corretta}; if $0 < \o < \tilde \o$, there is a positive solution to \eqref{corretta} for almost every $\o$. \\
Moreover Pomponio \& Ruiz also study in \cite{PR2} the case of a bounded domain for $p \in  (1, 3)$ and obtain some results on boundary 
concentration of solutions.
\\
Finally we mention the paper of Wan \& Tan \cite{WT} where they consider $g(u)=f(u)-\o u$ with $f$ asymptotically linear
and the paper of  Huh \cite{huh2} where
infinitely many solutions (possibly sign-changing) have been found  for $p>5$.

\begin{remark}
The fact that we obtain  nontrivial solutions only for sufficiently small $q$ is not surprising and we can not 
expect more. Indeed, in \cite{PR1} the  equation 
\begin{equation}\label{eq:pr}
-\Delta u+ \omega u+ 2u \int_{|x|}^{\infty}\frac{u^{2}(s)}{s}h_u(s)ds  + u\frac{h^{2}_u(|x|)}{|x|^{2}} = |u|^{p-1}u, \qquad \hbox{in }\RD
\end{equation}
is considered.
Performing the rescaling $u\mapsto u_\o=\o^\frac{1}{p-1}\ u(\o^\frac 12 \ \cdot)$ and defining $q=\o^\frac{2(3-p)}{p-1}$, equation \eqref{eq:pr} 
becomes  \eqref{corretta}, with $g(u)=-u+|u|^{p-1}u$.
By \cite[Theorem 1.2]{PR1}, for $p\in (1,3)$,  there is a solution of \eqref{corretta} with $g(u)=-u+|u|^{p-1}u$ only for sufficiently small $q$.
Hence, by means of Theorem \ref{Main}, we have now a more precise picture on the existence of solutions of \eqref{eq:pr}. There exist, indeed, 
$\o_0<\tilde{\o}<\bar \o$  and a decreasing sequence $(\o_n)_{n\ge 1}$ such that the following happens
\vspace{.1cm}
\begin{center}
\begin{tabular}{|l|l|l|l|}
\hline
for all $\o\in (\o_{n+1},\o_n) $
&
for a.e. $\o\in (\o_1,\o_0)$
&
for all $\o\in (\o_0,\tilde{\o})$
&
for all $\o>\bar\o$   
\\
\hline
at least $n$ (possibly sign-changing)
&
at least a positive
&
at least two positive
&
no nontrivial  
\\
solutions &  solution &  solutions &  solutions \\
\hline
\end{tabular}
\end{center}
\end{remark}

The paper is organized as follows. In Section \ref{se:deduction}, for the sake of completeness, we derive the equation \eqref{corretta}. 
It is obtained by the complete set of the Euler-Lagrange equations  of the Lagrangian density given e.g. in  \cite[eqn. (8)]{JP1} and by 
considering a more general nonlinearity, under a suitable ansatz. Then Section \ref{se:main} is devoted to prove Theorem \ref{Main}.

Throughout the paper, we denote by $C$  generic positive constants  specifying, if necessary, the parameters on 
which they depend. The constants may also change from line to line.

\section{Deduction of the equation}\label{se:deduction}

Let us consider the  three dimensional Lorentz space time $\mathbb R^{1,2}$ with metric tensor $\operatorname{diag}(+1,-1,-1)$
and coordinates $x^{\mu}=(ct,x^{1},x^{2})$, where $c$ is the  speed of light.
As usual, we adopt the Einstein convention on repeated indices, where greek indices always vary in $\{0,1,2\}$ while latin ones run in $\{1,2\}$.
In particular we will always distinguish in the following
between covariant and contravariant indices, which are obtained
ones from the others by using the metric.

The starting point for our equation is the Schr\"odinger Lagrangian density of the matter field
\begin{equation}
\label{LS}
\mathcal{L}_{\rm S}(\psi)= i \hbar \overline \psi  \partial_t\psi -\frac{\hbar^{2}}{2m} |\nabla \psi|^{2}+2W(\psi)
\end{equation}
where $\psi:\mathbb{R}^{1,2}\to\mathbb{C}$, $m>0$ is the mass parameter, $\hbar=h/2\pi$ is the normalized Planck constant and 
$W:\mathbb C\to \mathbb R$ is a  nonlinearity of type $W(\psi)=W(|\psi|)$ which describes the interaction among many particles. 

Let us define the electromagnetic tensor as
\[
F_{\mu\nu}=\partial_{\mu} A_{\nu}-\partial_{\nu}A_{\mu}
\]
where $A^{\mu}=(A^{0},\mathbf A)=(A^0,A^1,A^2)$ is the gauge potential.
We observe explicitly that, in virtue of the choice of the metric,
\begin{equation}\label{eq:cov-controv}
(A^{0}, A^{1},A^{2})=(A_{0},-A_{1},-A_{2})
\end{equation}
and indeed we will use, case by case, the notation which is more convenient.
In vectorial notation the electromagnetic field is given by
\begin{equation}
\label{EB}
\mathbf E=-\nabla A^{0}-\frac{1}{c}\partial_{t}\mathbf A, \qquad B= \nabla \times  \mathbf A.
\end{equation}

To study the interaction between the matter (expressed by the wave function $\psi$) and the electromagnetic field $(\mathbf{E},B)$ given by \eqref{EB}, 
we consider the gauge (or Weyl) covariant derivatives
$$ 
D_{t}= \partial_{t}+\frac{ie}{\hbar} A^{0},
\qquad
\mathbf D=\nabla -\frac{ie}{\hbar c}  \mathbf A,
$$
where $e$ is a coupling constant. 
We point out that these operators are obtained by the so called {\sl minimal coupling rule}, see e.g. \cite{Fel,Naber}.
Then we substitute in \eqref{LS} the derivatives with the covariant ones, getting
\[
\tilde{\mathcal{L}}_{\rm S}(\psi,A^0,\mathbf{A})= i \hbar \overline \psi  D_{t}\psi -\frac{\hbar^{2}}{2m} |\mathbf D \psi|^{2}+2W(\psi).
\]
However to obtain  the complete Lagrangian we need to consider also the term involving  the gauge potentials,
since they are unknown. In $\mathbb R^{1,2}$ one can take 
the more general term
\begin{equation}
\label{LDCS}
\mathcal{L}_{\rm MCS}=\underbrace{-\frac{1}{4} F^{\mu\nu} F_{\mu\nu}}_{\mathcal L_{\rm Max}}+ \underbrace{\frac{\kappa}{4}\varepsilon^{\mu\alpha\beta}A_{\mu}F_{\alpha\beta}}_{\mathcal{L}_{\rm CS}}
\end{equation}
which involves not only the usual Maxwell term but also the so-called Chern-Simons term.
Here $\varepsilon$ is the Levi-Civita tensor and $\kappa\in \mathbb R$ is a parameter which controls the Chern-Simons term.
Strictly speaking \eqref{LDCS} is the Lagrangian of the gauge potentials in the vacuum.

Thus the total Lagrangian density is
\begin{align*}
\mathcal L_{\rm tot}(\psi,A^0,\mathbf{A})
:= & 
\mathcal{L}_{\rm MCS}(A^0,\mathbf{A}) + \tilde{\mathcal{L}}_{\rm S}(\psi,A^0,\mathbf{A})\\
= & 
-\frac{1}{4} F^{\mu\nu} F_{\mu\nu}+ \frac{\kappa}{4}\varepsilon^{\mu\alpha\beta}A_{\mu}F_{\alpha\beta}
+i \hbar \overline \psi  D_{t}\psi -\frac{\hbar^{2}}{2m} |\mathbf D \psi|^{2}+2W(\psi).
\end{align*}

Due to the presence of the Chern-Simon term, the total Lagrangian is not  invariant 
with respect to the gauge transformations
\begin{equation}\label{invariance}
A^{\mu}\rightarrow A^{\mu}+\partial^{\mu}\chi,
\qquad
\psi \rightarrow \psi e^{\frac{ie}{\hbar c} \chi},
\qquad
\chi\in C^{\infty}(\mathbb R^{1,2}),
\end{equation}
nevertheless, its  Euler-Lagrange equations are gauge invariant.

As discussed in \cite{JP1}, the Chern-Simons electrodynamics
is obtained by taking the formal limit $|\kappa|\to \infty$ in the topologically massive model.
Indeed at large distances and low energies the lower derivatives
of the Chern-Simons term dominate the higher derivative appearing in the Maxwell term; hence this last 
term becomes negligible and the above total Lagrangian reduces to
\begin{equation*} 
\mathcal L (\psi,A^0,\mathbf{A})
=  \frac{\kappa}{4}\varepsilon^{\mu\alpha\beta}A_{\mu}F_{\alpha\beta}
+i \hbar \overline \psi  D_{t}\psi -\frac{\hbar^{2}}{2m} |\mathbf D \psi|^{2}+2W(\psi).
\end{equation*}
For this and further discussions see also  \cite{DJT,Ha,JP2,Sc,Tar}. This is the Lagrangian we are interested in.
By taking the variations of the action functional 
$\mathcal S=\iint \mathcal L dx dt$ with respect to $\psi,A_{\mu}$, recalling \eqref{eq:cov-controv}, we have the following Euler-Lagrange equations
\begin{equation}\label{EL}
\begin{split}
& i\hbar D_{t} \psi+\frac{\hbar^{2}}{2m} \mathbf D^{2}\psi =- W'(\psi)\\ 
& \kappa(\partial_{1}A_{2}-\partial_{2}A_{1})=e |\psi|^{2}\\ 
&\kappa(\partial_{2}A_{0}-\partial_{0}A_{2})= \frac{e}{c}\frac{\hbar}{m}\mathfrak{Im} ( \overline\psi D_{1} \psi)\\ 
&\kappa(\partial_{0}A_{1}-\partial_{1}A_{0})= \frac{e}{c}\frac{\hbar}{m}\mathfrak{Im} ( \overline\psi D_{2} \psi). 
\end{split}
\end{equation}
Note that these equations are invariant under the gauge transformations \eqref{invariance}.

If we define
\begin{equation*}\label{correnti}
 J^{\mu}=(c\rho,{\mathbf{J}}):= \Big(c \overline \psi \psi, \frac{\hbar}{2mi}(\overline\psi \mathbf D \psi -\psi \overline{\mathbf D \psi})\Big)
 =\Big(c |\psi|^{2} , \frac{\hbar}{m} \mathfrak{Im}(\overline \psi {\bf D}\psi)\Big),
\end{equation*}
 the last three equations in \eqref{EL} can be written,
 since $\mathfrak{Im}(\overline \psi  D^j \psi)=\mathfrak{Im}(\overline \psi  D_j \psi)$, as
\begin{equation}\label{CSeq}
\frac{\kappa}{2}\varepsilon^{\mu\alpha\beta}F_{\alpha\beta}=\frac{e}{c}J^{\mu}
\end{equation}
which are the gauge field equations of the Chern-Simons electrodynamics. Thus,
$J^{\mu}$ can be interpreted as {\sl a current density}.
In particular, we have the continuity equation for the currents $$\partial_{\mu}J^{\mu}=\partial_t \rho+\nabla\cdot\mathbf{J}=0.$$
Moreover, in terms of the electromagnetic field $(\mathbf E, B)$, equations \eqref{CSeq} are
\[
\kappa c B=-e J^{0},
\quad
\kappa c E^1=eJ^2,
\quad
\kappa c E^2=-eJ^1.
\]
The first equation yields the remarkable fact that
if $B=0$, then $\psi=0$ and that
any field configuration with total charge $Q(t)=e\int |\psi(t,x)|^{2}dx$ also carries a magnetic flux $\Phi(t)=\int B(t,x)dx$ given by
$$\Phi(t)=-\frac{1}{\kappa}Q(t),$$ and indeed they are conserved quantities (i.e. constant in time)
for ``well behaved'' $\psi$. This provide an explicit realization of anyons, see \cite{L,W}.
On the other hand, the other  two equations give the interesting identities
$$\nabla \cdot \mathbf E= \frac{e}{c\kappa}\nabla \times \mathbf J
\ \ \text{ and } \ \ \nabla\cdot \mathbf J=-\frac{c\kappa}{e}\nabla \times\mathbf E,$$
so the matter and the electromagnetic field support each other.

However, it is convenient to  write $\psi(t,x)=u(t,x)e^{i S(t,x)}$; 
hence by taking the variations of the action with respect to $u,S, A_{\mu}$
we get the following set of equations
\begin{equation}\label{ELS}
\begin{split}
&-\frac{\hbar^{2}}{2m}\Delta u+ \Big(\hbar\partial_{t}S+e A^{0}+\frac{\hbar^{2}}{2m}|\nabla S|^{2}
+\frac{e^{2}}{2mc^{2}}|\mathbf A|^{2}-\frac{\hbar e}{mc} \nabla S \cdot \mathbf A\Big)u= W' (u)\\ 
&\partial_{t}u^{2}+\frac{\hbar}{m}\nabla \cdot \left[\left(\nabla S - \frac{e}{\hbar c}\mathbf{A}\right)u^2\right]=0\\ 
& 
\kappa(\partial_{1}A_{2}-\partial_{2}A_{1})=e u^{2} \\ 
& 
\kappa(\partial_{2}A_{0}-\partial_{0}A_{2})=\frac{e\hbar}{cm}\left(\partial_1 S + \frac{e}{\hbar c}A_1\right)u^2\\ 
& 
\kappa(\partial_{0}A_{1}-\partial_{1}A_{0})=\frac{e\hbar}{cm}\left(\partial_2 S + \frac{e}{\hbar c}A_2\right)u^2. 
\end{split}
\end{equation}
The second equation is of course a continuity equation  (conservation law of currents),
as $\mathcal L$ does not depend explicitly
on $S$, but just on its  derivatives.


In the static case $A^{\mu}=A^{\mu}(x)$, by using the Helmholtz decomposition and  taking a suitable gauge, without loss of generality we can assume 
that $\lim_{|x|\to+\infty}A^0(x)=0$ (if we suppose that such a limit exists) and that $\partial_{j}A^{j}=0$ (Coulomb gauge).
%
%
An interesting case appears when we take the ansatz of standing waves
solutions  $\psi(t,x)=u(x)e^{i\omega t}$. 
Then the previous equations \eqref{ELS} become
\begin{equation}\label{ELF}
\begin{split}
&-\frac{\hbar^{2}}{2m}\Delta u+ \Big(\hbar\omega+e A^{0}+\frac{e^{2}}{2mc^{2}}|\mathbf A|^{2}\Big)u=W'(u)\\
&{\bf A}\cdot \nabla u^{2}=0\\
& \kappa(\partial_{1}A_{2}-\partial_{2}A_{1})=e u^{2} \\
& \kappa\partial_{2}A_{0}=\frac{e^2}{mc^2}A_1u^2\\ 
& \kappa\partial_{1}A_{0}=-\frac{e^2}{mc^2}A_2u^2.
\end{split}
\end{equation}

Arguing as in \cite{huh2}, the Coulomb gauge and the third equation in \eqref{ELF}
imply that $A_{1}, A_{2}$ are uniquely determined by $u,$
since they satisfy 
$$\Delta A_{1}=-\frac{e}{\kappa}\partial_{2} u^{2}, 
\quad
\Delta A_{2}=\frac{e}{\kappa}\partial_{1}u^{2}.$$
Hence they are given by
$$A_{1}=-\frac{e}{\kappa} G_{2}* u^{2}, 
\quad
A_{2}=\frac{e}{\kappa}G_{1}*u^{2},
\ \  \hbox{where } \ \
G_{i}(x)=\frac{1}{2\pi}\frac{x^{i}}{|x|^{2}}.$$
Coming back to the last two equations in \eqref{ELF} we infer that
$$-\Delta A_{0}= \frac{e^{2}}{\kappa m c^{2}} \left[\partial_{1}(A_{2}u^{2}) - \partial_{2}(A_{1}u^{2}) \right]$$
and hence
$$A_{0}=\frac{e^{2}}{\kappa m c^{2}}\left[ G_{2}* (A_{1}u^{2}) - G_{1}* (A_{2}u^{2}) \right].$$
Observe also that from the second equation in \eqref{ELF} it follows that, up to the ``trivial cases'',
the function $u$ is radial  if and only if $\mathbf A$ is a {\em tangential} vector field, i.e. of type
$$\mathbf A=\frac{e}{\kappa}h_{u}(x)\mathbf t,
\quad
\hbox{where }
\mathbf t=(x^{2}/|x|^{2},-x^{1}/|x|^{2}).$$ 
Moreover, since the problem is invariant by translations, to avoid the related difficulties, we look for radial solutions $u$. 
Thus, from this choice, arguing as in \cite[Lemma 3.3]{huh2}, it follows that
$\mathbf A$ has to be invariant for the group action
$${\rm T}_{g}\mathbf A(x)= g^{-1}\cdot \mathbf A (g(x)), 
\quad
g\in O(2),$$
and this readily implies that $h_{u}$ has to be a radial function.
Summing up, whenever $u$ is radial, the magnetic potential has to be necessarily written as
\begin{equation}\label{A}
A^{1}(x)=\frac{e}{\kappa}\frac{x^{2}}{|x|^{2}}h_{u}(|x|),
\quad
A^{2}(x)=-\frac{e}{\kappa}\frac{x^{1}}{|x|^{2}}  h_{u}(|x|).
\end{equation}
Finally, by \eqref{ELF} we see that 
$$\nabla A^{0}=\frac{e^{3}}{mc^{2}\kappa^{2} } u^{2}(|x|)h_{u}(|x|)  \mathbf n,
\ \ \text{where } \ \mathbf n=(x^{1}/|x|^{2}, x^{2}/|x|^{2})\,;
$$
in other words, the electric potential is radial and so can be written as
\begin{equation}\label{A0}
A^{0}(x)=A^{0}(|x|).
\end{equation}

Now we can find the explicit dependence of $A^{0}$ and $h_{u}$ from the function $u$;
indeed, by substituting \eqref{A} and \eqref{A0} into \eqref{ELF} 
one find (assuming $h_{u}(0)=0$, which is necessary to have $\mathbf A$ smooth)
\[
h_{u}(|x|)=
\int_{0}^{|x|}\tau u^{2}(\tau)d\tau,
\quad
A^{0}(|x|)=\frac{e^{3}}{ m c^2\kappa^{2}} \int_{|x|}^{\infty}\frac{u^{2}(\tau)}{\tau} h_{u}(\tau) d\tau.
\] 
With these expressions in hands the first equation in \eqref{ELF}, the  equation of the matter field, 
is
\begin{equation*} 
-\frac{\hbar^{2}}{2m}\Delta u+\hbar\omega u+
\frac{e^{4}}{mc^{2}\kappa^{2}}u\int_{|x|}^{\infty}\frac{u^{2}(s)}{s}h_{u}(s)ds+
\frac{e^{4}}{2mc^{2}\kappa^{2}}u\frac{h_{u}^{2}(|x|)}{|x|^{2}} = W'(u),
\end{equation*}
which is nothing but  \eqref{corretta}, ``normalizing'' the constants $\hbar$  and $2m$  and taking 
$$ q:=\frac{e^4}{c^2 \kappa^2}, \quad 
  g(u):=W'(u)-\omega u.
$$

\section{Proof of Theorem \ref{Main}}\label{se:main}

\subsection{Preliminaries}

Following \cite{HIT}, without loss of generality we can replace condition \eqref{vecchiaitg1} with
\begin{enumerate}[label=(g\arabic*'),ref=g\arabic*',start=1]
\item \label{itg1'} $\displaystyle \limsup_{\xi\to\infty}\frac{g(\xi)}{e^{\alpha\xi^{2}}}=  0$, for all $\alpha>0$.
\end{enumerate}

Define
$$m_{0}:=-\frac{1}{2}\limsup_{\xi\to0}\frac{g(\xi)}{\xi}>0$$
and equip $\Hr$ with the norm $\|\cdot\|^2=\|\n \cdot\|^2_2+m_0\|\cdot\|_2^2$.\\

Consider $p_0\in (1,\infty)$ and set 
\[
\hspace{.89cm}{\lambda}(\xi)
=
\begin{cases}
\max\{g(\xi)+m_0\xi, 0\}, & \mbox{for } \xi\ge 0,\\
-\lambda(-\xi), & \mbox{for }  \xi<0,
\end{cases}
\qquad
{\Lambda}(\xi)=\int_{0}^\xi {\lambda}(\tau)d\tau, 
\]

\[
\bar{\lambda}(\xi)
=
\begin{cases}
\displaystyle 
\xi^{p_0}\sup_{0<\tau\le\xi} \frac{\lambda(\tau)}{\tau^{p_0}}, & \mbox{for }  \xi>0,\\
0, & \mbox{for }  \xi=0,\\
-\bar{\lambda}(-\xi), & \mbox{for }  \xi<0,
\end{cases}
\qquad
\bar{\Lambda}(\xi)=\int_{0}^\xi \bar{\lambda}(\tau)\,d\tau. 
\]
The functions $\l$, $\L$, $\bar{\lambda}$ and $\bar{\L}$ satisfy the following properties (see \cite[Lemma 2.1, Corollary 2.2, Lemma 2.3]{HIT}).

\begin{lemma}\label{proplambda}
The following holds
\begin{enumerate}[label=(\roman*),ref=\roman*]
\item $g(\xi)+m_0\xi\le \lambda(\xi)\le \bar{\lambda}(\xi)$, for all $\xi\ge 0$.
\item $\lambda(\xi)\ge 0$ and $\bar{\lambda}(\xi)\ge 0$, for all $\xi\ge 0$.
\item There exists $\delta_0>0$ such that $\lambda(\xi)=\bar{\lambda}(\xi)=0$ for $\xi\in [0,\delta_0]$.
\item There exists $\xi_0>0$ such that $0<\lambda(\xi_0)\le \bar{\lambda}(\xi_0)$.
\item The map $\xi\mapsto{\bar{\lambda}(\xi)}/{\xi^{p_0}}$: $(0,\infty)\rightarrow\mathbb R$ is non-decreasing. 
\item $\lambda(\xi)$, $\bar{\lambda}(\xi)$ satisfy \eqref{itg1'}.
\end{enumerate}
\end{lemma}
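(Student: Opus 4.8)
The plan is to verify the six items essentially in the order stated, since \textup{(i)} and \textup{(iii)} feed into the later ones; most of the work is bookkeeping from the definitions, and the only genuine analytic difficulty sits in item \textup{(vi)} for $\bar\lambda$, where the supremum defining $\bar\lambda$ could in principle inflate the growth at infinity.

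First I would dispatch the purely algebraic items. For \textup{(i)}, the lower bound $g(\xi)+m_0\xi\le\lambda(\xi)$ is immediate from $\lambda(\xi)=\max\{g(\xi)+m_0\xi,0\}$, while the upper bound $\lambda(\xi)\le\bar\lambda(\xi)$ follows for $\xi>0$ by choosing $\tau=\xi$ in the supremum defining $\bar\lambda(\xi)$; at $\xi=0$ both vanish, using $g(0)=0$ (oddness and continuity) so that $\lambda(0)=0$. Item \textup{(ii)} is then clear: $\lambda\ge 0$ by the maximum, and since each quotient $\lambda(\tau)/\tau^{p_0}$ is nonnegative its supremum is nonnegative, whence $\bar\lambda\ge 0$. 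Item \textup{(v)} is equally direct once one writes $\bar\lambda(\xi)/\xi^{p_0}=\sup_{0<\tau\le\xi}\lambda(\tau)/\tau^{p_0}$, a supremum over a set that grows with $\xi$ and is therefore non-decreasing.

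Next I would treat the two items invoking the hypotheses on $g$. For \textup{(iii)} I would use \eqref{vecchiaitg2'} in the form $\limsup_{\xi\to 0}g(\xi)/\xi=-2m_0<0$: taking the threshold $-m_0$ produces $\delta_0>0$ with $g(\xi)<-m_0\xi$ on $(0,\delta_0]$, so $g(\xi)+m_0\xi<0$ and hence $\lambda\equiv 0$ there; since every quotient entering the supremum then vanishes, $\bar\lambda\equiv 0$ on $[0,\delta_0]$ as well. For \textup{(iv)} I would argue by contradiction: were $\lambda\equiv 0$, then $g(\xi)\le-m_0\xi$ for all $\xi\ge 0$, giving $G(\xi)\le-\tfrac{m_0}{2}\xi^2<0$ for every $\xi>0$ and contradicting \eqref{vecchiaitg3}; thus $\lambda(\xi_0)>0$ for some $\xi_0>0$, and $\bar\lambda(\xi_0)\ge\lambda(\xi_0)>0$ by \textup{(i)}.

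The hard part will be \textup{(vi)}, specifically the claim for $\bar\lambda$. For $\lambda$ itself the bound is cheap: on the set where $\lambda>0$ one has $\lambda(\xi)=g(\xi)+m_0\xi$, and since $g(\xi)/e^{\alpha\xi^2}$ has vanishing $\limsup$ by \eqref{itg1'} while $m_0\xi/e^{\alpha\xi^2}\to 0$, the $\limsup$ of $\lambda(\xi)/e^{\alpha\xi^2}$ is $\le 0$, hence $=0$ as $\lambda\ge 0$. For $\bar\lambda$ I would interpolate exponents: fix $\alpha>0$, pick $\alpha'\in(0,\alpha)$, and use the bound just proved (together with continuity and vanishing near $0$) to get a constant $C_{\alpha'}$ with $\lambda(\tau)\le C_{\alpha'}e^{\alpha'\tau^2}$ for all $\tau\ge 0$. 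Since $\lambda$ vanishes on $[0,\delta_0]$ and the map $\tau\mapsto e^{\alpha'\tau^2}/\tau^{p_0}$ is increasing for $\tau>\sqrt{p_0/(2\alpha')}$, for large $\xi$ the supremum $\sup_{0<\tau\le\xi}\lambda(\tau)/\tau^{p_0}$ is attained near the right endpoint and is controlled by $C_{\alpha'}e^{\alpha'\xi^2}/\xi^{p_0}$; multiplying by $\xi^{p_0}$ yields $\bar\lambda(\xi)\le C_{\alpha'}e^{\alpha'\xi^2}$ for large $\xi$, so $\bar\lambda(\xi)/e^{\alpha\xi^2}\le C_{\alpha'}e^{-(\alpha-\alpha')\xi^2}\to 0$. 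The one delicate point to pin down is precisely that the location of the supremum migrates to the right endpoint for large $\xi$, which is where the eventual monotonicity of $e^{\alpha'\tau^2}/\tau^{p_0}$ past $\tau=\sqrt{p_0/(2\alpha')}$ is essential.
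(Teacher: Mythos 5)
Your proof is correct. Note that the paper itself offers no proof of this lemma: it simply refers the reader to \cite[Lemma 2.1, Corollary 2.2, Lemma 2.3]{HIT}, so there is no in-text argument to compare against; your verification follows the same elementary route one finds there. All six items check out, including the only genuinely delicate point, namely \textup{(vi)} for $\bar{\lambda}$: you correctly exploit that $\lambda$ vanishes on $[0,\delta_0]$ to restrict the supremum to $\tau\ge\delta_0$, and that $\tau\mapsto e^{\alpha'\tau^2}/\tau^{p_0}$ is eventually increasing, so that $\bar{\lambda}(\xi)\le C_{\alpha'}e^{\alpha'\xi^2}$ for large $\xi$ and the interpolation $\alpha'<\alpha$ closes the argument.
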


\begin{cor}\label{propLambda}
The following holds
\begin{enumerate}[label=(\roman*),ref=\roman*]
\item \label{i32}$G(\xi)+m_0|\xi|^2/2\le \L(\xi)\le \bar{\L}(\xi)$, for all $\xi\in\mathbb{R}$.
\item $\L(\xi),\bar{\L}(\xi)\ge 0$, for all $\xi\in\mathbb{R}$.
\item There exists $\delta_0>0$ such that $\L(\xi)=\bar{\Lambda}(\xi)=0$ for $\xi\le \delta_0$.
\item $\bar{\L}(\zeta_0)-m_0\zeta_0^2 /2>0$.
\item $0\le (p_0+1)\bar{\L}(\xi)\le\xi\bar{\l}(\xi)$, for all $\xi\in\mathbb{R}$.
\item $\Lambda(\xi)$, $\bar{\Lambda}(\xi)$ satisfy \eqref{itg1'}.
\end{enumerate}
\end{cor}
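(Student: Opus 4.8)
The plan is to deduce every item of Corollary \ref{propLambda} by integrating the corresponding pointwise statement in Lemma \ref{proplambda}, reducing throughout to the case $\xi \ge 0$ and then extending to $\xi < 0$ by parity. Indeed, since $\lambda$ and $\bar{\lambda}$ are odd, their primitives $\Lambda(\xi)=\int_0^\xi\lambda$ and $\bar{\Lambda}(\xi)=\int_0^\xi\bar{\lambda}$ are even, and $G$ is even as well because $g$ is odd by \eqref{vecchiaitg0}; hence each claimed (in)equality need only be checked for $\xi\ge 0$ and then follows for all $\xi$.

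For item (i) I would integrate the chain $g(\tau)+m_0\tau\le\lambda(\tau)\le\bar{\lambda}(\tau)$ from Lemma \ref{proplambda}(i) over $[0,\xi]$, which gives $G(\xi)+m_0\xi^2/2\le\Lambda(\xi)\le\bar{\Lambda}(\xi)$; by the parity remark this is (i) for all $\xi$. Item (ii) is immediate from Lemma \ref{proplambda}(ii), the integral of a nonnegative function being nonnegative. Item (iii) follows from Lemma \ref{proplambda}(iii): if $\lambda=\bar{\lambda}=0$ on $[0,\delta_0]$, their primitives vanish there, hence on $[-\delta_0,\delta_0]$ by evenness. For item (iv) I would combine the already-proved (i) with assumption \eqref{vecchiaitg3}: $\bar{\Lambda}(\zeta_0)\ge\Lambda(\zeta_0)\ge G(\zeta_0)+m_0\zeta_0^2/2$, so $\bar{\Lambda}(\zeta_0)-m_0\zeta_0^2/2\ge G(\zeta_0)>0$.

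Item (v) is the one requiring an actual computation. Fix $\xi>0$. By the monotonicity in Lemma \ref{proplambda}(v), for $0<\tau\le\xi$ one has $\bar{\lambda}(\tau)\le(\bar{\lambda}(\xi)/\xi^{p_0})\,\tau^{p_0}$; integrating over $[0,\xi]$ and using $\int_0^\xi\tau^{p_0}\,d\tau=\xi^{p_0+1}/(p_0+1)$ yields $\bar{\Lambda}(\xi)\le\xi\bar{\lambda}(\xi)/(p_0+1)$, that is $(p_0+1)\bar{\Lambda}(\xi)\le\xi\bar{\lambda}(\xi)$, while the lower bound $0\le(p_0+1)\bar{\Lambda}(\xi)$ is just (ii). Oddness of $\bar{\lambda}$ makes $\xi\mapsto\xi\bar{\lambda}(\xi)$ even, matching the evenness of $\bar{\Lambda}$, so (v) extends to $\xi<0$.

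The main obstacle is item (vi), the growth control, since the antiderivative of $e^{\alpha\xi^2}$ is not elementary and a naive estimate loses a factor $\xi$. Given $\alpha>0$, I would exploit that \eqref{itg1'} holds for \emph{every} exponent, applying it with a smaller constant $\beta\in(0,\alpha)$: for each $\e>0$ there is $R$ with $\lambda(\tau)\le\e\,e^{\beta\tau^2}$ for $\tau\ge R$, whence $\Lambda(\xi)\le C_R+\e\int_R^\xi e^{\beta\tau^2}\,d\tau\le C_R+\e\,\xi\,e^{\beta\xi^2}$ using the crude bound on the increasing integrand. Dividing by $e^{\alpha\xi^2}$ gives $\Lambda(\xi)/e^{\alpha\xi^2}\le C_R e^{-\alpha\xi^2}+\e\,\xi\,e^{-(\alpha-\beta)\xi^2}\to 0$ as $\xi\to\infty$, because $\alpha-\beta>0$ kills the polynomial factor. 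The same argument applies verbatim to $\bar{\Lambda}$, proving (vi). The only subtlety to get right is precisely the choice $\beta<\alpha$, which is what absorbs the spurious factor $\xi$ coming from the integration of the exponential.
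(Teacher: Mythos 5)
Your proof is correct and is essentially the argument the paper delegates to the citation of \cite[Corollary 2.2]{HIT}: each item is obtained by integrating the corresponding pointwise property of $\lambda$, $\bar\lambda$ from Lemma \ref{proplambda} over $[0,\xi]$ and extending by parity, with the only nontrivial points being exactly the two you isolate, namely the use of the monotonicity of $\xi\mapsto\bar\lambda(\xi)/\xi^{p_0}$ for item (v) and the choice of an auxiliary exponent $\beta<\alpha$ to absorb the factor $\xi$ in item (vi). As a minor remark, your argument yields item (iii) on $|\xi|\le\delta_0$, which is the intended (and correct) form of the statement.
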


\begin{lemma}\label{lambdacomp}
Suppose that $ u_j\rightharpoonup  u_0$ in $H^1_r(\RD)$, then
\begin{enumerate}[label=(\roman*),ref=\roman*]
\item $\dis \ird\L(u_j)dx \to \ird \L(u_0) dx$, $\dis \ird\bar{\L}(u_j)dx \to \ird \bar{\L}(u_0) dx$;
\medskip
\item $\l(u_j) \to  \l(u_0)$, $\bar{\l}(u_j) \to  \bar{\l}(u_0)$ strongly in $H^{-1}_r(\RD)$.
\end{enumerate}
\end{lemma}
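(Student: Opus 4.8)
The plan is to prove this as a Strauss-type compactness lemma, combining the radial decay of functions in $\Hr$ with the Trudinger--Moser inequality to control the exponential (subcritical) growth of the four nonlinearities. First I would record that weak convergence gives a uniform bound $\|u_j\|\le M$. The decisive structural fact is that, by Lemma \ref{proplambda}(iii) and Corollary \ref{propLambda}(iii), all of $\l,\L,\bar{\l},\bar{\L}$ vanish on $[-\delta_0,\delta_0]$. Using the radial Strauss estimate $|u(x)|\le C|x|^{-1/2}\|u\|$ for radial $H^1$ functions, the uniform bound yields $|u_j(x)|\le CM|x|^{-1/2}\le\delta_0$ as soon as $|x|\ge R:=(CM/\delta_0)^2$. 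Hence $\l(u_j),\L(u_j),\bar{\l}(u_j),\bar{\L}(u_j)$ are all supported in the fixed ball $B_R$, independently of $j$, which localizes every integral and functional in the statement to $B_R$.

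On $B_R$ I would invoke the Rellich--Kondrachov theorem: $u_j\to u_0$ strongly in $L^p(B_R)$ for every $p\in[1,\infty)$ and, up to a subsequence, $u_j\to u_0$ a.e. in $B_R$. The remaining ingredient is uniform integrability, and here the two-dimensional setting forces the Trudinger--Moser inequality, ordinary Sobolev embeddings being insufficient since $\l,\bar{\l}$ may grow faster than any power. By Lemma \ref{proplambda}(vi) and Corollary \ref{propLambda}(vi), the four functions satisfy \eqref{itg1'}, so for every $\alpha>0$ there is $C_\alpha$ with $|\l(\xi)|,|\bar{\l}(\xi)|,\L(\xi),\bar{\L}(\xi)\le C_\alpha e^{\alpha\xi^2}$. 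Choosing $\alpha$ small relative to $M$ and applying the Trudinger--Moser inequality on $B_R$ (or its $\RD$ version), the families $\{e^{\alpha q u_j^2}\}_j$ stay bounded in $L^1(B_R)$ for some $q>1$; combined with Chebyshev's estimate $|\{|u_j|>K\}|\le M^2/K^2\to0$ and Hölder, this gives uniform integrability of $\L(u_j)$ and of $|\l(u_j)|^{p'}$ for a fixed $p'>1$.

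With a.e. convergence and uniform integrability in hand, Vitali's convergence theorem yields (i): $\ird\L(u_j)\to\ird\L(u_0)$ and likewise for $\bar{\L}$ along the subsequence, and since the limit is subsequence-independent the full sequence converges. For (ii) I would test against $\phi\in\Hr$ with $\|\phi\|\le1$; since $\l(u_j)-\l(u_0)$ is supported in $B_R$, Hölder gives
\[
\Big|\ird\big(\l(u_j)-\l(u_0)\big)\phi\,dx\Big|\le\|\l(u_j)-\l(u_0)\|_{L^{p'}(B_R)}\,\|\phi\|_{L^{p}(B_R)}\le C\,\|\l(u_j)-\l(u_0)\|_{L^{p'}(B_R)},
\]
using the embedding $\Hr\hookrightarrow L^{p}(B_R)$ for $p<\infty$. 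Vitali applied in $L^{p'}(B_R)$ forces the right-hand side to $0$, uniformly in $\phi$, so $\l(u_j)\to\l(u_0)$ in $H^{-1}_r(\RD)$; the argument for $\bar{\l}$ is identical.

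The main obstacle is the interplay between loss of compactness at infinity and the exponential growth of the nonlinearities. The former is neutralized by the radial decay together with the vanishing of $\l,\bar{\l}$ near the origin, which confines all integrands to a fixed ball where $H^1$ compactness is available; the latter is controlled only through the Trudinger--Moser inequality and not through the usual Sobolev embeddings. The genuine technical care lies in tuning the growth parameter $\alpha$ small enough, given the uniform bound $M$, so that the relevant exponential integrals remain bounded and Vitali's theorem can be invoked.
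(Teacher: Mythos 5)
Your proof is correct and follows essentially the same route as the source the paper relies on: the paper gives no proof of this lemma but cites \cite[Lemma 2.3]{HIT}, where the argument is exactly this Strauss-type scheme --- radial decay plus the vanishing of $\l,\bar\l,\L,\bar\L$ on $[-\delta_0,\delta_0]$ to confine everything to a fixed ball, compact embedding there, and the growth condition \eqref{itg1'} (via Trudinger--Moser with $\alpha$ chosen small relative to the uniform bound) to get the uniform integrability needed for Vitali. The only points worth tidying are the subsequence-to-full-sequence step, which you should state for (ii) as well as (i), and the notational clash of your exponent $q$ with the coupling constant of the equation.
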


Now we recall some useful properties of the nonlocal term of the functional $J_q$ defined in \eqref{J}.
For any $u\in \Hr,$ let
\begin{equation*}
N(u) := \int_{\mathbb R^2} \frac{u^2(|x|)}{|x|^2}\Big(\int_0^{|x|}su^2(s)ds\Big)^2 dx.
\end{equation*}

We explicitly remark that
\begin{equation*} 
	N(u)
	\le
	C\int_{\mathbb R^2} u^2(|x|)\Big(\int_{B(0,|x|)}u^4(y)dy\Big) dx 
	\le C \|u\|^6 
\end{equation*}
and, for any $v\in \Hr$,
\[
N'(u)[v] 
=
2\int_{\mathbb{R}^2} \frac{u(|x|) v(|x|)}{|x|^{2}}\Big(\int_{0}^{|x|} su^{2}(s)ds\Big)^{2}dx
+4\int_{\mathbb{R}^2}\frac{u^{2}(|x|)}{|x|^{2}}\Big( \int_{0}^{|x|}su^{2}(s)ds\Big)\Big(\int_{0}^{|x|}su(s)v(s)ds\Big)dx. \\
\]
In particular,
\begin{equation}\label{+N'}
N'(u)[u] = 6\int_{\mathbb R^2} \frac{u^2(|x|)}{|x|^2}\Big(\frac{1}{2\pi}\int_{B(0,|x|)}u^2(y)dy\Big)^2 dx = 6N(u).
\end{equation}
Moreover, it is easy to see that the nonlocal term $N(u)$ has the following rescaling properties
$$ N(u(\tau \cdot))=\tau^{-4}N(u(\cdot))\quad \text{ and }\quad N'(u(\tau \cdot))[v(\tau \cdot)]=\tau^{-4}N'(u(\cdot))[v(\cdot)]\quad\hbox{for all } \tau>0.$$
We will make use of this property throughout the paper without any other comment.

By using the compact embedding of $H_r^1(\mathbb{R}^2)$ into $L^p(\mathbb{R}^2)$ for $p>2$, we get the following compactness properties of $N(u)$ 
(see \cite[Lemma 3.2]{BHS}).
\begin{lemma}\label{lem:compact}
	If $(w_{j})_j$ converges  weakly to $w$ in $H_r^{1}(\mathbb R^{2})$ as $j\rightarrow\infty$, then, up to a subsequence,
	the following convergences hold
	\begin{align*}
	& N(w_{j})\to N(w), \\
	& N'(w_{j})[w_{j}]\to N'(w)[w],\\
	& N'(w_{j})[v] \to N'(w)[v], \hbox{ for all } v\in H_r^{1}(\mathbb R^{2}).
	\end{align*}
\end{lemma}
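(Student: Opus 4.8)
The plan is to derive all three limits from a single structural observation combined with the radial compact embedding $\Hr \hookrightarrow L^p(\RD)$, valid for every $p>2$. Passing to a subsequence, I would first record the available compactness: from $w_j \weakto w$ in $\Hr$ one gets $w_j \to w$ strongly in $L^p(\RD)$ for all $p>2$ and a.e.\ in $\RD$, while only the \emph{weak} convergence $w_j\weakto w$ in $L^2(\RD)$ is available, the embedding into the borderline space $L^2$ being \emph{not} compact. In particular $w_j^2\to w^2$ strongly in $L^2(\RD)$, since $\|w_j^2-w^2\|_2\le\|w_j-w\|_4\,\|w_j+w\|_4\to0$. The key reformulation, using \eqref{+N'}, is to introduce the nonlocal weight $\Phi_u(x):=\frac{1}{|x|}\int_0^{|x|}s u^2(s)\,ds=\frac{1}{2\pi|x|}\int_{B(0,|x|)}u^2(y)\,dy$, so that $N(u)=\int_{\RD}u^2(x)\,\Phi_u(x)^2\,dx$. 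By Cauchy--Schwarz the factor $1/|x|$ exactly absorbs the area of $B(0,|x|)$, giving the pointwise bound $\Phi_u(x)^2\le\frac1{4\pi}\int_{B(0,|x|)}u^4(y)\,dy\le\frac1{4\pi}\|u\|_4^4$; a short computation then shows $\Phi_u\in L^2(\RD)$ with norm controlled by $\|u\|_2$ and $\|u\|_4$. This is the device that replaces the missing $L^2$-compactness by the available $L^4$-compactness.

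For the first limit I would show $\Phi_{w_j}(x)\to\Phi_w(x)$ for every $x\neq0$: indeed $\int_{B(0,|x|)}w_j^2\to\int_{B(0,|x|)}w^2$ because $w_j^2\to w^2$ in $L^2(\RD)$ and $\mathbf 1_{B(0,|x|)}\in L^2(\RD)$. Writing $w_j^2\Phi_{w_j}^2-w^2\Phi_w^2=(w_j^2-w^2)\Phi_{w_j}^2+w^2(\Phi_{w_j}^2-\Phi_w^2)$, I would bound the first term by $\|w_j^2-w^2\|_2\,\|\Phi_{w_j}^2\|_2\to0$ (strong $L^2$ convergence of $w_j^2$ against a uniformly $L^2$-bounded weight), and the second term by dominated convergence, using $w^2\in L^1(\RD)$, the uniform bound $\Phi_{w_j}^2\le C$ and the pointwise convergence $\Phi_{w_j}^2\to\Phi_w^2$. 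Hence $N(w_j)\to N(w)$. The second limit is then immediate from \eqref{+N'}, since $N'(w_j)[w_j]=6N(w_j)\to6N(w)=N'(w)[w]$.

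For the third limit I would use the explicit expression of $N'(u)[v]$, rewritten as $N'(u)[v]=2\int_{\RD}u\,v\,\Phi_u^2\,dx+4\int_{\RD}u^2\,\Phi_u\,\Psi_{u,v}\,dx$, where $\Psi_{u,v}(x):=\frac1{2\pi|x|}\int_{B(0,|x|)}u v\,dy$ satisfies, again by Cauchy--Schwarz, the uniform bound $|\Psi_{u,v}(x)|\le\frac1{2\sqrt\pi}\|u\|_4\|v\|_4$. Here $v$ is fixed, so the pointwise convergence $\Psi_{w_j,v}(x)\to\Psi_{w,v}(x)$ follows already from the \emph{weak} $L^2$-convergence $w_j\weakto w$ tested against $v\,\mathbf 1_{B(0,|x|)}\in L^2(\RD)$. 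Splitting each product into a factor converging strongly in $L^4$ (namely $w_j-w$, resp.\ $w_j^2-w^2$) times a uniformly bounded weight, plus a fixed $L^1$ factor times the difference of the convergent weights, the first pieces are controlled by H\"older's inequality and the second pieces by dominated convergence, exactly as above. This yields $N'(w_j)[v]\to N'(w)[v]$.

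The main obstacle, and the only genuinely delicate point, is the absence of $L^2$-compactness: one cannot simply pass to the limit in $N$ through strong $L^2$ convergence of $w_j$. The whole argument hinges on the structural estimate $\Phi_u(x)^2\le\frac1{4\pi}\int_{B(0,|x|)}u^4$, which converts the inner $L^2$-average into an $L^4$-quantity and supplies simultaneously the uniform pointwise bound and the strong convergence needed to run dominated convergence. The accompanying bookkeeping is to verify the decay of $\Phi_{w_j}$ as $|x|\to\infty$ (it behaves like $\|w_j\|_2^2/(2\pi|x|)$), so that the weights remain uniformly bounded in $L^2(\RD)$ and no mass escapes to infinity.
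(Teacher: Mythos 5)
Your strategy is, in substance, the one the paper itself relies on: the paper gives no proof of this lemma, but invokes \cite[Lemma 3.2]{BHS} together with the compact embedding of $\Hr$ into $L^p(\RD)$ for $p>2$, and your self-contained argument (strong $L^4$ convergence of $w_j$, hence strong $L^2$ convergence of $w_j^2$, combined with pointwise convergence and uniform bounds for the nonlocal weights $\Phi_{w_j}$, $\Psi_{w_j,v}$, then H\"older plus dominated convergence) is exactly the kind of reasoning behind that citation; using the identity $N'(w_j)[w_j]=6N(w_j)$ from \eqref{+N'} to dispose of the second limit is also how the paper's formula is meant to be used.

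There is, however, one genuine error, asserted twice: the claim that $\Phi_u\in L^2(\RD)$ (``the weights remain uniformly bounded in $L^2(\RD)$''). This is false for every $u\not\equiv 0$: since $\int_{B(0,|x|)}u^2\,dy\to\|u\|_2^2$, one has $2\pi|x|\,\Phi_u(x)\to\|u\|_2^2$ as $|x|\to\infty$, so $\Phi_u^2$ decays exactly like $|x|^{-2}$, and $\int_{|x|>1}|x|^{-2}\,dx$ diverges logarithmically in the plane. Fortunately, this claim is not what your estimates actually use: every H\"older step involves $\|\Phi_{w_j}^2\|_2=\|\Phi_{w_j}\|_4^2$ (and, in the third limit, $\|\Phi_{w_j}\Psi_{w_j,v}\|_2$), i.e.\ uniform $L^4$ bounds on the weights, and these are true. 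They follow from the two bounds you already established: combining the uniform bound $\Phi_u\le\tfrac{1}{2\sqrt{\pi}}\|u\|_4^2$ with the decay $\Phi_u(x)\le\|u\|_2^2/(2\pi|x|)$ gives
\begin{equation*}
\Phi_u^4\le C\min\{1,|x|^{-4}\}\in L^1(\RD),
\end{equation*}
with $C$ depending only on $\|u\|_{H^1}$, and the same reasoning applies to $\Psi_{u,v}$, since $|\Psi_{u,v}(x)|\le\min\bigl\{\tfrac{1}{2\sqrt{\pi}}\|u\|_4\|v\|_4,\ \|u\|_2\|v\|_2/(2\pi|x|)\bigr\}$. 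So replace ``$\Phi_u\in L^2(\RD)$'' by ``$\Phi_u\in L^4(\RD)$ uniformly in $j$ (equivalently $\Phi_u^2\in L^2(\RD)$)'' wherever it occurs; with that correction your proof is complete and correct.
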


\subsection{The minimax scheme}

Due to the presence of the nonlocal term and of a nonlinearity satisfying very general assumptions, there are several problems related to 
the geometry and the compactness of the functional $J_q$. So, we are not able to find solutions of \eqref{corretta} directly  and we  have
to modify the functional by means of a truncation.
Therefore, let  $\varphi\in C^{\infty}(\mathbb R_{+}, [0,1])$ satisfy
\begin{equation*} 
\begin{cases}
\varphi(s)=1, & \mbox{for } s\in [0,1],\\
\varphi(s)=0, & \mbox{for } s\in[2,\infty),\\
\|\varphi'\|_{\infty}\le 2
\end{cases}
\end{equation*}
and consider the perturbed functional $\mathcal J_q:H_{r}^{1}(\mathbb R^{2})\to \mathbb R$
\begin{equation*} 
\mathcal J_q(u)=\frac{1}{2}\|\nabla u\|_{2}^{2}+
\frac{q}{2}\varphi(q N(u))N(u)-
\int_{\mathbb R^{2}}G(u)\,dx
\end{equation*}
which is $C^1$ and
\begin{equation*} 
\mathcal J_q'(u)[v] =  \int_{\mathbb R^{2}} \nabla u \nabla v\,dx+
\frac{q^2}{2}\varphi'(q{N(u)})N(u) N'(u)[v] 
 +\frac{q}{2}\varphi(qN(u))N'(u)[v] -\int_{\mathbb R^{2}}g(u)v\,dx.
\end{equation*}

We are going to find critical points for $\mathcal J_q$. Evidently, if we have a bound of type $N(u)\le 1/q$ on the critical points 
of $\mathcal J_q$,  then they will be 
critical points of $J_q$ and hence solutions of the equation \eqref{corretta}. 
Moreover, let us define the following comparison functional
\begin{equation*} 
\mathcal I(u):=\frac{1}{2}\|\nabla u\|_{2}^{2}-
\int_{\mathbb R^{2}}\bar{\Lambda}(u)dx.
\end{equation*}
Observe that $\mathcal J_{q}\ge \mathcal I$ (by \eqref{i32} of Corollary \ref{propLambda}) and they are both even functionals.

As stated in the next lemma, $\mathcal J_{q}$ and $\mathcal I$ have the geometry of the Symmetric Mountain Pass Theorem. 
In what  follows, we set    $\mathbb D_{n}=\{\sigma\in \mathbb R^{n}: |\sigma|\le1\}$
and $\mathbb S^{n-1}=\partial \mathbb D_{n}.$

\begin{lemma}\label{JI}
For all $q>0$, the functionals $\mathcal J_{q}$ and $\mathcal I$ satisfy the following properties.
\begin{enumerate}[label=(\roman*),ref=\roman*]
\item \label{i35}There exist $r_{0}, \rho_{0}>0$ such that
\begin{align*}
\mathcal J_{q}(u) \ge \mathcal I(u)\ge 0,  & \text{ for }  \|u\|\le r_{0},\\ 
\mathcal J_{q}(u) \ge \mathcal I(u)\ge\rho_{0},   &\text{ for } \|u\|= r_{0}. 
\end{align*}
\item For every $n\in \mathbb N$ there exists an odd and continuous map $\gamma_{n}: \mathbb S^{n-1}\to H^{1}_{r}(\mathbb R^{2})$ 
such that
\begin{equation}\label{2b} 
\mathcal I(\gamma_{n}(\sigma))\le \mathcal J_{q}(\gamma_{n}(\sigma))<0.
\end{equation}
\end{enumerate}
\end{lemma}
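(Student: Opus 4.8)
The plan is to reduce both assertions to the comparison functional $\mathcal I$. Since $\varphi\ge 0$ and $N\ge 0$, the truncated nonlocal term $\frac{q}{2}\varphi(qN(u))N(u)$ is nonnegative, and Corollary~\ref{propLambda}\eqref{i32} gives $G(\xi)+m_0|\xi|^2/2\le\bar\Lambda(\xi)$; hence $\mathcal J_q\ge\mathcal I$ pointwise, as already noted. Consequently, for part \eqref{i35} it suffices to bound $\mathcal I$ from below, while for part (ii) the first inequality in \eqref{2b} is automatic and one only has to produce the odd map along which $\mathcal J_q<0$ (which then forces $\mathcal I\le\mathcal J_q<0$ as well).

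For the behaviour near the origin I would fix $p_0>2$ and estimate $\int_{\R^2}\bar\Lambda(u)\,dx$ through the fine structure of $\bar\Lambda$. Because $\bar\Lambda$ vanishes on $[-\delta_0,\delta_0]$ (Corollary~\ref{propLambda}(iii)), is comparable to $|\xi|^{p_0+1}$ near the origin by the monotonicity of $\xi\mapsto\bar\lambda(\xi)/\xi^{p_0}$ in Lemma~\ref{proplambda}(v), and is exponentially subcritical at infinity (Corollary~\ref{propLambda}(vi) and \eqref{itg1'}), one gets for every $\alpha>0$ a constant $C_\alpha$ with $0\le\bar\Lambda(\xi)\le C_\alpha|\xi|^{p_0+1}(e^{\alpha\xi^2}-1)$. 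A Hölder splitting, the Moser--Trudinger inequality (applicable once $\|\nabla u\|_2$ is small) and the Gagliardo--Nirenberg inequality in $\R^2$ then yield $\int_{\R^2}\bar\Lambda(u)\,dx\le C\|\nabla u\|_2^{p_0}\|u\|_2$ for $\|u\|\le r_0$. As $p_0>2$, this is superquadratic in the gradient, so on the ball $\{\|u\|\le r_0\}$ one finds $\mathcal I(u)\ge\tfrac12\|\nabla u\|_2^2-Cr_0\|\nabla u\|_2^{p_0}\ge 0$, which proves the first line of \eqref{i35}.

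For part (ii) I would exploit the two--dimensional scale invariance of the Dirichlet integral together with \eqref{vecchiaitg3}. Starting from a radial Berestycki--Lions type profile $\omega$ with $\int_{\R^2}G(\omega)\,dx>0$ (built from $G(\zeta_0)>0$ by taking a plateau at height $\zeta_0$ over a large ball with a thin transition layer), the rescalings $\omega_\tau=\omega(\cdot/\tau)$ satisfy $\|\nabla\omega_\tau\|_2^2=\|\nabla\omega\|_2^2$ and $\int G(\omega_\tau)\,dx=\tau^2\int G(\omega)\,dx$, while $N(\omega_\tau)=\tau^4 N(\omega)\to\infty$, so $\varphi(qN(\omega_\tau))=0$ for $\tau$ large. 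Hence $\mathcal J_q(\omega_\tau)=\tfrac12\|\nabla\omega\|_2^2-\tau^2\int G(\omega)\,dx\to-\infty$. To get the odd $n$--dimensional family I would place $n$ such profiles on mutually disjoint radial annuli and set $\gamma_n(\sigma)=\big(\sum_{i=1}^n\sigma_i\,\omega^{(i)}\big)(\cdot/\tau)$ for $\sigma\in\mathbb S^{n-1}$: the disjoint supports split both the Dirichlet energy and $\int G$ into sums, the map is clearly odd and continuous, and since $\sum_i\sigma_i^2=1$ keeps at least one $|\sigma_i|$ bounded below, the profiles and a large $\tau$ can be tuned so that $\mathcal J_q(\gamma_n(\sigma))<0$ uniformly in $\sigma$, using compactness of $\mathbb S^{n-1}$.

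The step I expect to be the main obstacle is the second line of \eqref{i35}, namely the uniform lower bound $\mathcal I\ge\rho_0>0$ on the sphere $\{\|u\|=r_0\}$. In $\R^2$ the positive part $\tfrac12\|\nabla u\|_2^2$ of $\mathcal I$ does not control the full norm $\|u\|$, because the Dirichlet integral is scale invariant: the naive estimate above only delivers $\mathcal I(u)\ge\tfrac14\|\nabla u\|_2^2$, which degenerates along a low, wide radial bump $a\,v(\cdot/R)$ kept on the sphere, whose gradient energy tends to $0$ while its amplitude drops below $\delta_0$ (so that $\bar\Lambda$ vanishes on it). This is precisely the Berestycki--Lions scaling phenomenon and is where the lack of compactness emphasized in the Introduction concentrates; extracting a genuine positive constant $\rho_0$ therefore cannot rely on the crude bound and must use the full Trudinger--Moser structure together with the sharp asymptotics of $\bar\Lambda$, ultimately in conjunction with the augmented functional and the penalization scheme.
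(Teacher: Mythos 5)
Your part (ii) is essentially the paper's argument: the authors also take the odd map $\pi_n$ of Berestycki--Lions \cite[Theorem 10]{BL2}, built from profiles at height $\zeta_0$ on disjoint annuli so that $\int_{\mathbb{R}^2}G(\pi_n(\sigma))\,dx\ge 1$ for all $\sigma\in\mathbb S^{n-1}$, and dilate, $\gamma_n(\sigma)=\pi_n(\sigma)(\cdot/\theta)$. The only difference is in how the nonlocal term is killed: they simply observe that the cut-off forces $\tfrac q2\theta^4\varphi\big(q\theta^4N(\pi_n(\sigma))\big)N(\pi_n(\sigma))\le 1$, whereas you argue $\varphi(qN(\omega_\tau))=0$ for $\tau$ large, which additionally requires $\inf_\sigma N(\pi_n(\sigma))>0$ (true by compactness, but unnecessary). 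One caveat: the naive combination $\sum_i\sigma_i\omega^{(i)}$ does not obviously give $\int G\ge c>0$ uniformly in $\sigma$, since $G(\sigma_i\zeta_0)$ may be negative for small $|\sigma_i|$; this is exactly the difficulty the construction in \cite{BL2} is designed to handle, so cite it rather than rebuild it.

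The genuine gap is the second line of \eqref{i35}, and you have in fact located it yourself: a lower bound of the form $\mathcal I(u)\ge\frac12\|\nabla u\|_2^2-C\|\nabla u\|_2^{p_0}$ controls only the gradient, and your own test functions $a\,v(\cdot/R)$ with $|a|\le\delta_0$ and $aR\sim r_0$ show that \emph{no} estimate of $\int\bar\Lambda(u)$ can rescue a gradient-only bound on the sphere $\{\|u\|=r_0\}$. But your conclusion --- that one must invoke sharp Trudinger--Moser asymptotics, the augmented functional and the penalization scheme --- is wrong; none of that enters here. The missing ingredient is the mass term that Corollary~\ref{propLambda}\eqref{i32} hands you for free: $-\int_{\mathbb{R}^2} G(u)\,dx\ge \frac{m_0}{2}\|u\|_2^2-\int_{\mathbb{R}^2}\bar\Lambda(u)\,dx$, so that $\mathcal J_q(u)\ge\frac12\|u\|^2-\int_{\mathbb{R}^2}\bar\Lambda(u)\,dx$ with the \emph{full} norm $\|u\|^2=\|\nabla u\|_2^2+m_0\|u\|_2^2$ (this is how the comparison functional must be read, as in \cite{HIT}). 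Since $\bar\Lambda$ vanishes on $[-\delta_0,\delta_0]$, $\xi\mapsto\bar\lambda(\xi)/\xi^{p_0}$ is nondecreasing and $\bar\Lambda$ is Trudinger--Moser subcritical, one gets $\int_{\mathbb{R}^2}\bar\Lambda(u)\,dx\le C\|u\|^{p_0+1}$ for $\|u\|\le 1$, whence $\mathcal J_q(u)\ge\frac12\|u\|^2-C\|u\|^{p_0+1}\ge\frac14\|u\|^2$ for $\|u\|\le r_0$ small; both lines of \eqref{i35} follow with $\rho_0=r_0^2/4$. This is precisely \cite[Lemma 2.4]{HIT}, which is all the paper invokes for part (i), and the positive $\rho_0$ is indispensable later (Lemma~\ref{bninfty}) to get $b_n(q)\ge\rho_0>0$. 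A minor further point: your interpolation $\int\bar\Lambda(u)\le C\|\nabla u\|_2^{p_0}\|u\|_2$ is not scaling-consistent in $\mathbb{R}^2$ (Gagliardo--Nirenberg forces the $L^2$ exponent to be $2$, i.e. $\|u\|_{p_0+1}^{p_0+1}\le C\|\nabla u\|_2^{p_0-1}\|u\|_2^2$, so one should take $p_0>3$ for your version of the first line), but this is cosmetic next to the main issue.
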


\begin{proof}
The first part follows by \cite[Lemma 2.4]{HIT}.
To prove \eqref{2b}, we argue as in \cite[Theorem 10]{BL2}: for any $n\in \mathbb{N}$, 
an odd and continuous 
map $\pi_n: \mathbb S^{n-1}\rightarrow H_r^1(\mathbb{R}^2)$ is defined such that
\begin{equation*} 0\notin \pi_{n}(\mathbb S^{n-1})\ \mbox{and} \ \int_{\mathbb{R}^2} G(\pi_{n}(\sigma))\,dx\ge 1,\ \ \mbox{for all}
\ \sigma\in\mathbb S^{n-1}.
\end{equation*}
Then for $\theta$ sufficiently large, setting $\gamma_n(\sigma):= \pi_n(\sigma)(\cdot/\theta)$, we have
\begin{align*}
\mathcal J_q(\gamma_n(\sigma))
&=\frac{1}{2}\int_{\mathbb{R}^2}|\nabla \pi_n(\sigma)|^2\,dx+\frac{q}{2}\theta^4\varphi( q\theta^4 N(\pi_n(\sigma)))
N(\pi_n(\sigma)) -\theta^2\int_{\mathbb{R}^2}G(\pi_n(\sigma))\,dx\\
&\le \frac{1}{2}\int_{\mathbb{R}^2}|\nabla \pi_n(\sigma)|^2\,dx -\theta^2<0.
\end{align*}
\end{proof}

Due to Lemma \ref{JI}, for every $n\in \mathbb N$, we can define a family of mappings $\Gamma_n$ by
\begin{equation*} 
\Gamma_{n}:=\{\gamma\in C(\mathbb D_{n}, H^{1}_{r}(\mathbb R^{2})): \gamma(-\sigma)=-\gamma(\sigma) \ \
\text{and } \ \gamma_{|\mathbb S^{n-1}}=\gamma_{n}\},
\end{equation*}
which is nonempty since
\[
\alpha_{n}(\sigma)=
\begin{cases}
0, & \mbox{for }\sigma=0,\\
|\sigma| \gamma_{n}(\sigma/|\sigma|), & \mbox{for } \sigma\in \mathbb D_{n}\setminus \{0\},
\end{cases}
\]
belongs to $\Gamma_{n}$. 
\\
Now define the values
\begin{equation*}
b_{n}(q):=\inf_{\gamma\in \Gamma_{n}}\max_{\sigma\in \mathbb D_{n}} \mathcal J_{q}(\gamma(\sigma)).
\end{equation*}
A first property of these levels is the following estimate.
\begin{lemma}\label{AnB}
For all $q>0$ and $n\in \mathbb N$, there exists a constant $C(n)$, 
such that $b_{n}(q)\le C(n)$.
\end{lemma}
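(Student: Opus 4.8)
The plan is to test the minimax value $b_n(q)$ against the single, explicitly constructed competitor $\alpha_n\in\Gamma_n$ exhibited just before the statement, and to exploit the fact that the cut-off $\varphi$ renders the nonlocal term \emph{uniformly} bounded in $q$. Since $\alpha_n\in\Gamma_n$, the very definition of $b_n(q)$ as an infimum gives at once
\[
b_n(q)\le \max_{\sigma\in\mathbb D_n}\mathcal J_q(\alpha_n(\sigma)),
\]
so the whole problem reduces to estimating the right-hand side by a constant that does not depend on $q$.

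The key observation is that the truncated nonlocal term is bounded by $1$, uniformly in $u$ and in $q$. Writing $s:=qN(u)\ge 0$ and using that $\varphi$ is supported in $[0,2]$ with $0\le\varphi\le 1$ (so that $s\varphi(s)\le 2$ for every $s\ge 0$), we obtain
\[
\frac{q}{2}\,\varphi(qN(u))\,N(u)=\frac12\,s\,\varphi(s)\le 1\qquad\text{for every }u\in\Hr\text{ and every }q>0.
\]
Consequently, introducing the $q$-independent functional $\mathcal J_0(u):=\tfrac12\|\nabla u\|_2^2-\int_{\mathbb R^2}G(u)\,dx$ (that is, $\mathcal J_q$ with the nonlocal term dropped), we get the pointwise bound $\mathcal J_q(u)\le \mathcal J_0(u)+1$, whence
\[
b_n(q)\le \max_{\sigma\in\mathbb D_n}\mathcal J_0(\alpha_n(\sigma))+1.
\]

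It then remains only to note that the right-hand side is finite and depends solely on $n$. The competitor $\alpha_n:\mathbb D_n\to\Hr$ is continuous and $\mathbb D_n$ is compact, so its image is a compact subset of $\Hr$; on it the continuous functional $\mathcal J_0$ attains a finite maximum $C_0(n)$ (continuity of $u\mapsto\int_{\mathbb R^2}G(u)\,dx$ on $\Hr$ following from the growth hypotheses \eqref{itg1'} and \eqref{vecchiaitg2'} via the Trudinger--Moser embedding), and this value involves only the fixed data $g$ and the construction of $\alpha_n$, hence only $n$. Setting $C(n):=C_0(n)+1$ concludes the argument. I do not expect any serious obstacle here: the entire content is the uniform-in-$q$ estimate on the truncated nonlocal term, which is precisely what the cut-off $\varphi$ was designed to supply; the remainder is the compactness of $\mathbb D_n$ together with the continuity of the fixed path $\alpha_n$ and of $\mathcal J_0$.
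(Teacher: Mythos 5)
Your proof is correct and follows essentially the same route as the paper: both test $b_n(q)$ against a fixed element of $\Gamma_n$, observe that the cut-off makes the nonlocal term uniformly bounded by $1$ (your inequality $\tfrac12 s\varphi(s)\le 1$ is exactly the paper's case split $N\ge 2/q$ versus $N<2/q$), and bound the remaining $q$-independent part on the compact image of the path. No gaps.
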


\begin{proof}
For a fixed $\gamma\in \Gamma_{n}$, we have 
\begin{align*}
b_{n}(q)
&\le
\max_{\sigma\in \mathbb D_{n}} \mathcal J_{q}(\gamma(\sigma)) \\
&\le
\max_{\sigma\in \mathbb D_{n}}\Big\{\frac{1}{2}\|\nabla \gamma(\sigma)\|_{2}^{2}-\int G(\gamma(\sigma))\Big\}
+\max_{\sigma\in \mathbb D_{n}}\Big\{\frac{q}{2}\varphi(qN(\gamma(\sigma)))N(\gamma(\sigma))\Big\}\\
&\le
\begin{cases}
C(n), & \textrm{if } N(\gamma(\sigma))\ge 2/q\\
C(n)+1,  & \textrm{if } N(\gamma(\sigma))< 2/q.
\end{cases}
\end{align*}
\end{proof}

\begin{lemma}\label{bninfty}
For all $q>0$, the values $b_{n}(q)$  are divergent 
and so we can assume that they are 
strictly monotone.
\end{lemma}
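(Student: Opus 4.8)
The plan is to bound $b_n(q)$ from below by the minimax levels of the $q$-independent comparison functional $\mathcal I$, and then to prove that the latter diverge. Since $\mathcal J_q\ge\mathcal I$ pointwise (by \eqref{i32} of Corollary \ref{propLambda}) and, by the proof of Lemma \ref{JI}, the boundary map $\gamma_n$ satisfies $\mathcal I(\gamma_n(\sigma))\le\mathcal J_q(\gamma_n(\sigma))<0$, the same family $\Gamma_n$ is admissible for both functionals. Hence, for every $\gamma\in\Gamma_n$, $\max_{\sigma}\mathcal J_q(\gamma(\sigma))\ge\max_{\sigma}\mathcal I(\gamma(\sigma))\ge c_n$, where $c_n:=\inf_{\gamma\in\Gamma_n}\max_{\sigma\in\mathbb D_n}\mathcal I(\gamma(\sigma))$, and therefore $b_n(q)\ge c_n$. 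As $\mathcal I$ does not depend on $q$, it suffices to show $c_n\to+\infty$.

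First I would record that $\mathcal I$ has the symmetric mountain pass geometry in every dimension. The inner barrier is Lemma \ref{JI}\eqref{i35}, i.e. $\mathcal I\ge\rho_0>0$ on $\|u\|=r_0$. For the behaviour at infinity, Lemma \ref{proplambda}(iv)--(v) yields a superquadratic bound $\bar{\Lambda}(\xi)\ge c|\xi|^{p_0+1}-C$, so that, all norms being equivalent on a finite-dimensional subspace, $\mathcal I\to-\infty$ uniformly on every such subspace; together with the Ambrosetti--Rabinowitz inequality $(p_0+1)\bar{\Lambda}(\xi)\le\xi\bar{\lambda}(\xi)$ of Corollary \ref{propLambda}(v) this is precisely the geometry that motivates replacing $g$ by the better-behaved $\bar{\lambda}$. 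The values $c_n$ are then the genus (symmetric mountain pass) minimax levels for this geometry.

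The hard part is compactness, and here the two-dimensional setting bites. Because $\bar{\Lambda}$ provides no control on $\|u\|_2$ and, for $N=2$, the Dirichlet energy $\|\nabla u\|_2^2$ is invariant under $u\mapsto u(\cdot/\theta)$, the functional $\mathcal I$ need not satisfy the Palais--Smale condition, and the geometry above alone cannot force $c_n\to\infty$ (without compactness the levels could accumulate at a finite value). To recover compactness I would follow the Jeanjean--\cite{HIT} device recalled in the Introduction and pass to the augmented functional $\widetilde{\mathcal I}(\theta,u)=\tfrac12\|\nabla u\|_2^2-e^{2\theta}\int_{\mathbb R^2}\bar{\Lambda}(u)\,dx$ on $\mathbb R\times H_r^1(\mathbb R^2)$, the extra variable $\theta$ supplying the missing scaling (Pohozaev) information. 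Its minimax levels $\widetilde c_n$, defined over the same class augmented by the scaling variable, satisfy $\widetilde c_n\le c_n$ (restrict the admissible maps to the slice $\theta\equiv0$); moreover, along the corresponding (PS) sequences the parameter $\theta$ restores the scaling identity, while the compact embedding $H_r^1(\mathbb R^2)\hookrightarrow L^p(\mathbb R^2)$, $p>2$, together with Lemma \ref{lambdacomp}, controls the nonlinear term. Thus $\widetilde{\mathcal I}$ enjoys the relevant compactness, and the classical symmetric mountain pass theorem for even functionals yields an unbounded sequence $\widetilde c_n\to+\infty$, whence $c_n\ge\widetilde c_n\to+\infty$.

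Finally, choosing the maps $\gamma_n$ (hence the classes $\Gamma_n$) nested, the restriction to the equatorial disk $\mathbb D_n\subset\mathbb D_{n+1}$ sends $\Gamma_{n+1}$ into $\Gamma_n$, which gives $b_n(q)\le b_{n+1}(q)$; thus $(b_n(q))_n$ is non-decreasing and, being bounded below by $c_n\to+\infty$, it diverges. Extracting a strictly increasing subsequence and relabelling, we may assume the $b_n(q)$ strictly monotone, as claimed. The main obstacle, as stressed above, is the compactness step for $\widetilde{\mathcal I}$ in dimension two; everything else is a reduction or a standard minimax comparison.
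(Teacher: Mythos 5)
Your reduction is exactly the paper's: since $\mathcal J_q\ge\mathcal I$ and the classes $\Gamma_n$ are admissible for both functionals, $b_n(q)\ge c_n:=\inf_{\gamma\in\Gamma_n}\max_{\sigma\in\mathbb D_n}\mathcal I(\gamma(\sigma))$, and everything rests on showing $c_n\to+\infty$. The paper disposes of that in one line by citing \cite[Lemma 3.2]{HIT}, adding only the observation that every $\gamma\in\Gamma_n$ meets the sphere $\|u\|=r_0$, so that $0<\rho_0\le c_n\le b_n(q)$. You instead attempt to prove $c_n\to+\infty$, and it is there that your argument has a genuine gap.

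The gap is the compactness step. You route the divergence through the augmented functional $\widetilde{\mathcal I}(\theta,u)$ and then invoke ``the classical symmetric mountain pass theorem'' to conclude $\widetilde c_n\to+\infty$. But the divergence of the minimax levels in that theorem is obtained from a deformation/genus argument that requires the Palais--Smale condition for the functional to which it is applied (if the levels accumulated at a finite value, the critical set there would have infinite genus, which is impossible for a compact set); for the augmented functional this compactness is precisely the delicate point. The Jeanjean--\cite{HIT} augmentation does not restore (PS): it only manufactures, level by level, one special (PS) sequence carrying approximate Poho\v{z}aev information (cf. Proposition \ref{propPS}), which is far from the compactness of critical sets needed for the divergence argument. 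So the sentence ``Thus $\widetilde{\mathcal I}$ enjoys the relevant compactness'' asserts exactly what would have to be proved. The detour is also unnecessary, and misses the point of introducing $\bar\lambda$: the comparison functional is built so that \emph{no} augmentation is needed. By Corollary \ref{propLambda}\eqref{i32} one has $\mathcal J_q(u)\ge\frac12\|u\|^2-\int_{\RD}\bar\Lambda(u)\,dx$ with the full equivalent $H^1$ norm in the quadratic part (this is the comparison functional of \cite{HIT}; note that Lemma \ref{JI}\eqref{i35} only makes sense with the $m_0\|u\|_2^2/2$ term present), and for this functional the Ambrosetti--Rabinowitz inequality $0\le(p_0+1)\bar\Lambda(\xi)\le\xi\bar\lambda(\xi)$ yields boundedness of (PS) sequences in $\Hr$, while the compact embedding $H^1_r(\RD)\hookrightarrow L^p(\RD)$, $p>2$, together with Lemma \ref{lambdacomp}, yields their convergence; hence (PS) holds outright and the classical symmetric mountain pass theorem applies directly, which is the content of \cite[Lemma 3.2]{HIT}. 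A secondary slip: the pointwise bound $\bar\Lambda(\xi)\ge c|\xi|^{p_0+1}-C$ cannot be integrated over $\RD$ (the constant contributes $-\infty$), so $\mathcal I\to-\infty$ on finite-dimensional subspaces does not follow from it; the negative boundary values are supplied by the Berestycki--Lions construction already recorded in Lemma \ref{JI}. Your closing monotonicity remark is fine: once the values diverge one relabels a strictly increasing subsequence, as the paper does.
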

\begin{proof}
In \cite[Lemma 3.2]{HIT} it has been proved that  the values
$$c_{n}:=\inf_{\gamma\in \Gamma_{n}}\max_{\sigma\in \mathbb D_{n}} \mathcal I(\gamma(\sigma))\to+\infty\quad\hbox{as }n\to+\infty.$$
Moreover it is easy to see that 
for all $\gamma\in \Gamma_{n}$, $\gamma(\mathbb D_{n})\cap\left\{u\in H^{1}_{r}(\mathbb R^{2}): \|u\|=r_{0}\right\}\neq\emptyset$. 
The conclusion then follows since, by \eqref{i35} Lemma \ref{JI}, 
\begin{equation*} 
0<\rho_{0}\le c_{n}\le b_{n}(q).
\end{equation*}
\end{proof}

%

To deal with the lack of compactness, it is convenient to work in the augmented space $\mathbb R\times H^{1}_{r}(\mathbb R^{2}).$
For this, we define the extended functional
\begin{equation}\label{extended}
\begin{split}
\tilde{\mathcal J}_{q}(\theta,u):=\mathcal J_{q}(u(e^{-\theta}\cdot))=
\frac{1}{2}\|\nabla u\|_{2}^{2}+
\frac{q}{2}e^{4\theta}\varphi(qe^{4\theta}N(u))N(u)-e^{2\theta}\int_{\mathbb R^{2}} G(u)\,dx,
\end{split}
\end{equation} 
and its derivative will be denoted by
$\tilde {\mathcal J}_{q}'=
(\partial_{\theta} \tilde {\mathcal J}_{q}, \partial_{u}\tilde {\mathcal J}_{q})$
with
\begin{equation}\label{partialtheta}
\partial_{\theta} \tilde {\mathcal J}_{q}(\theta, u)=
2q e^{4\theta}\varphi(qe^{4\theta}N(u))N(u)
+2q^2 e^{8\theta} \varphi'(qe^{4\theta}N(u))N^2(u)
-2e^{2\theta}\int_{\mathbb R^2} G(u)\,dx,
\end{equation}
and
\begin{equation}
\label{partialu}
\begin{split}
\partial_{u} \tilde {\mathcal J}_{q}(\theta, u)[v] =&  \int_{\mathbb R^{2}} \nabla u \nabla v\,dx+
\frac{q^2}{2}e^{8\theta}\varphi'(qe^{4\theta}N(u))N(u) N'(u)[v] \\
& +\frac{q}{2}e^{4\theta}\varphi(qe^{4\theta}N(u))N'(u)[v] -e^{2\theta}\int_{\mathbb R^{2}}g(u)v\,dx.
\end{split}
\end{equation}
for all $v\in H_r^1(\mathbb R^{2})$.


Let us define the classes
\[
\tilde\Gamma_{n}=\left\{\tilde\gamma=(\theta, \eta)\in C(\mathbb D_{n}, \mathbb R\times H^{1}_{r}(\mathbb R^{2})):
\theta \text{ is even}, \eta \ \text{is odd, and }\tilde \gamma_{|\mathbb S^{n-1}}=(0,\gamma_{n})\right\},
\]
where $\gamma_{n}$ is given in Lemma \ref{JI}, and the levels
\begin{equation*}
\tilde b_{n}(q):= \inf_{\gamma\in \tilde \Gamma_{n}}\max_{\sigma\in \mathbb D_{n}} \tilde {\mathcal J}_{q}(\gamma(\sigma)).
\end{equation*}
Arguing as in \cite[Lemma 4.1]{HIT}, we have
\begin{lemma}\label{bn}
For all $q>0$, we have $\tilde b_{n}(q)=b_{n}(q)$; hence $\tilde b_{n}(q) \to +\infty$. 
\end{lemma}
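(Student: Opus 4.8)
The plan is to prove the two inequalities $\tilde b_n(q)\le b_n(q)$ and $b_n(q)\le \tilde b_n(q)$ separately, using as the sole structural ingredient the defining identity $\tilde{\mathcal J}_q(\theta,u)=\mathcal J_q(u(e^{-\theta}\cdot))$, which says that displacing in the $\theta$-direction of the augmented space is exactly a dilation of the profile. (Here it is essential that in dimension two the Dirichlet integral is scale invariant, while $N(u(e^{-\theta}\cdot))=e^{4\theta}N(u)$ and $\int G(u(e^{-\theta}\cdot))=e^{2\theta}\int G(u)$, matching the exponential weights in \eqref{extended}.)

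For $\tilde b_n(q)\le b_n(q)$ I would embed $\Gamma_n$ into $\tilde\Gamma_n$ by $\gamma\mapsto(0,\gamma)$. If $\gamma\in\Gamma_n$, then the constant map $\theta\equiv 0$ is even, $\eta:=\gamma$ is odd, and on $\mathbb S^{n-1}$ one has $(0,\gamma)=(0,\gamma_n)$, so $(0,\gamma)\in\tilde\Gamma_n$. Since $\tilde{\mathcal J}_q(0,\gamma(\sigma))=\mathcal J_q(\gamma(\sigma))$ for every $\sigma\in\mathbb D_n$, the two maxima over $\mathbb D_n$ coincide; taking the infimum over $\gamma\in\Gamma_n$ gives the inequality.

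For the reverse inequality $b_n(q)\le\tilde b_n(q)$, given any $\tilde\gamma=(\theta,\eta)\in\tilde\Gamma_n$ I would set $\gamma(\sigma):=\eta(\sigma)(e^{-\theta(\sigma)}\cdot)$ and verify that $\gamma\in\Gamma_n$. Oddness holds because $\theta$ is even and $\eta$ is odd, whence $\gamma(-\sigma)=\eta(-\sigma)(e^{-\theta(-\sigma)}\cdot)=-\eta(\sigma)(e^{-\theta(\sigma)}\cdot)=-\gamma(\sigma)$, using that dilation commutes with negation. The boundary condition holds because on $\mathbb S^{n-1}$ we have $\theta=0$ and $\eta=\gamma_n$, so $\gamma=\gamma_n$. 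By the scaling identity, $\mathcal J_q(\gamma(\sigma))=\tilde{\mathcal J}_q(\theta(\sigma),\eta(\sigma))$ for each $\sigma$, hence $\max_{\sigma\in\mathbb D_n}\mathcal J_q(\gamma(\sigma))=\max_{\sigma\in\mathbb D_n}\tilde{\mathcal J}_q(\tilde\gamma(\sigma))$. Since $\gamma\in\Gamma_n$, the left-hand side is $\ge b_n(q)$, and taking the infimum over $\tilde\gamma\in\tilde\Gamma_n$ yields $b_n(q)\le\tilde b_n(q)$.

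The only nonroutine point—and thus the step I expect to be the main obstacle—is checking that $\gamma(\sigma)=\eta(\sigma)(e^{-\theta(\sigma)}\cdot)$ is continuous from $\mathbb D_n$ into $H^1_r(\mathbb R^2)$. This reduces to the joint continuity of the dilation map $(\theta,u)\mapsto u(e^{-\theta}\cdot)$ on $\mathbb R\times H^1_r(\mathbb R^2)$, a standard consequence of the strong continuity of dilations in $H^1_r(\mathbb R^2)$, composed with the continuity of $\sigma\mapsto\theta(\sigma)$ and $\sigma\mapsto\eta(\sigma)$; no growth condition on $g$ enters this purely functional-analytic fact. Granting it, the two inequalities give $\tilde b_n(q)=b_n(q)$, and the divergence $\tilde b_n(q)\to+\infty$ is then immediate from Lemma \ref{bninfty}.
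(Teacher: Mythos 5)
Your proof is correct and is essentially the argument the paper invokes: the paper proves this lemma only by reference to \cite[Lemma 4.1]{HIT}, and that argument is exactly your two-inequality scheme --- embedding $\Gamma_n$ into $\tilde\Gamma_n$ via $\gamma\mapsto(0,\gamma)$, and conversely sending $(\theta,\eta)\mapsto\eta(\cdot)(e^{-\theta(\cdot)}\,\cdot)$, using the identity $\tilde{\mathcal J}_q(\theta,u)=\mathcal J_q(u(e^{-\theta}\cdot))$ and the joint continuity of dilations on $\mathbb{R}\times H^1_r(\mathbb{R}^2)$. The symmetry and boundary checks and the appeal to Lemma \ref{bninfty} for the divergence are all as intended.
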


To show that $b_{n}(q)$ are critical values for $\mathcal J_{q}$, we begin by showing that $\tilde b_{n}(q)$,
i.e. $b_{n}(q)$ by Lemma \ref{bn}, are ``almost critical values'' for $\tilde {\mathcal J}_{q}$ with a further important property. 
The proof is similar to \cite[Proposition 4.2]{HIT}.
\begin{proposition}\label{propPS}
For all $q>0$ and $n\in \mathbb N$, there exists a (PS) sequence $(\theta_{j}^{(n,q)}, u_{j}^{(n,q)})_{j}$
for $\tilde {\mathcal J}_{q}$ at level $ b_{n}(q)$ such that $\theta_{j}^{(n,q)}\to 0$ as $j\to\infty.$
\end{proposition}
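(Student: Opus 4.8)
The plan is to realize $b_n(q)$ as a minimax value of $\tilde{\mathcal J}_q$ over the augmented class $\tilde\Gamma_n$ and to produce the sequence by Ekeland's variational principle \emph{on the path space}, the point of the extra variable $\theta$ being that a critical point of $\tilde{\mathcal J}_q$ records, through $\partial_\theta\tilde{\mathcal J}_q$, the approximate Poho\v zaev identity, while the particular way we perturb will keep $\theta$ small. By Lemma \ref{bn} we already have $\tilde b_n(q)=b_n(q)=:c$, and by Lemma \ref{bninfty} $c\ge\rho_0>0$; moreover on $\mathbb S^{n-1}$ every $\tilde\gamma\in\tilde\Gamma_n$ equals $(0,\gamma_n)$, where by \eqref{2b} $\tilde{\mathcal J}_q(0,\gamma_n(\sigma))=\mathcal J_q(\gamma_n(\sigma))<0<c$. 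I would equip $\tilde\Gamma_n$ with the complete metric $d(\tilde\gamma_1,\tilde\gamma_2)=\|\theta_1-\theta_2\|_\infty+\max_\sigma\|\eta_1(\sigma)-\eta_2(\sigma)\|$, where $\tilde\gamma_i=(\theta_i,\eta_i)$, and set $\Psi(\tilde\gamma)=\max_{\sigma\in\mathbb D_n}\tilde{\mathcal J}_q(\tilde\gamma(\sigma))$, which is continuous, bounded below, and satisfies $\inf_{\tilde\Gamma_n}\Psi=c$.

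The first key observation, which is exactly what will force $\theta_j\to0$, is that one may choose a minimizing sequence for $\Psi$ lying in the hyperplane $\{\theta\equiv0\}$. Indeed, given any $\tilde\gamma=(\theta,\eta)\in\tilde\Gamma_n$, define $\hat\gamma:=(0,\hat\eta)$ with $\hat\eta(\sigma)=\eta(\sigma)(e^{-\theta(\sigma)}\,\cdot)$. Since $\theta$ is even and $\eta$ is odd, $\hat\eta$ is odd; on $\mathbb S^{n-1}$ one has $\theta=0$, whence $\hat\eta=\gamma_n$; and by the definition \eqref{extended} one computes $\tilde{\mathcal J}_q(0,\hat\eta(\sigma))=\mathcal J_q(\eta(\sigma)(e^{-\theta(\sigma)}\cdot))=\tilde{\mathcal J}_q(\theta(\sigma),\eta(\sigma))$. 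Thus $\hat\gamma\in\tilde\Gamma_n$ and $\Psi(\hat\gamma)=\Psi(\tilde\gamma)$, so from any minimizing sequence I obtain one, say $\hat\gamma_k=(0,\hat\eta_k)$ with $\Psi(\hat\gamma_k)\le c+\varepsilon_k$, $\varepsilon_k\to0$, whose $\theta$-component is identically zero. Applying Ekeland's principle to $\Psi$ at $\hat\gamma_k$ with parameter $\lambda_k=\sqrt{\varepsilon_k}$ produces $\tilde\gamma_k^*=(\theta_k^*,\eta_k^*)\in\tilde\Gamma_n$ with $\Psi(\tilde\gamma_k^*)\le c+\varepsilon_k$, with $d(\tilde\gamma_k^*,\hat\gamma_k)\le\sqrt{\varepsilon_k}$ — so that $\|\theta_k^*\|_\infty\le\sqrt{\varepsilon_k}\to0$ — and with the almost-critical-path property $\Psi(\tilde\gamma)\ge\Psi(\tilde\gamma_k^*)-\sqrt{\varepsilon_k}\,d(\tilde\gamma,\tilde\gamma_k^*)$ for all $\tilde\gamma\in\tilde\Gamma_n$.

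It then remains to extract from each $\tilde\gamma_k^*$ a point near its maximum at which the full differential $\tilde{\mathcal J}_q'=(\partial_\theta\tilde{\mathcal J}_q,\partial_u\tilde{\mathcal J}_q)$ is small, and this is where the main work lies, as in \cite[Proposition 4.2]{HIT}. The argument is the standard consequence of the almost-critical-path inequality: were $\|\tilde{\mathcal J}_q'\|$ bounded away from zero on the set of near-maximal points of $\tilde\gamma_k^*$, one could build an odd-in-$u$, locally Lipschitz pseudo-gradient field on $\mathbb R\times\Hr$ and flow $\tilde\gamma_k^*$ a controlled amount to strictly lower its maximum, contradicting the Ekeland inequality. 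The boundary $\mathbb S^{n-1}$, where $\tilde{\mathcal J}_q=\mathcal J_q\circ\gamma_n<0<c$, stays fixed because the flow is localized near the level $c$, and the evenness of $\tilde{\mathcal J}_q$ in $u$ (note $N(-u)=N(u)$ and $G$ is even) lets the deformation preserve $\tilde\Gamma_n$. Relabelling the resulting points as $(\theta_j,u_j):=\tilde\gamma_k^*(\sigma_k)$, I get $\tilde{\mathcal J}_q(\theta_j,u_j)\to c=b_n(q)$ and $\tilde{\mathcal J}_q'(\theta_j,u_j)\to0$, while $|\theta_j|\le\|\theta_k^*\|_\infty\to0$ yields $\theta_j\to0$. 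I expect the truncated nonlocal term to cause no extra difficulty at this stage, since $\tilde{\mathcal J}_q$ is $C^1$ and even in $u$, so the equivariant minimax scheme applies verbatim; the nonlocal features will become the real obstacle only afterwards, in analysing the compactness and the uniform-in-$q$ boundedness of the sequence just produced.
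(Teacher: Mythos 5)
Your argument is correct and is essentially the proof the paper has in mind: the paper simply defers to \cite[Proposition 4.2]{HIT}, whose core mechanism is exactly your reparametrization $\hat\eta(\sigma)=\eta(\sigma)(e^{-\theta(\sigma)}\cdot)$ showing that minimizing paths may be taken with $\theta\equiv 0$, so that the almost-critical points produced near such paths automatically have $\theta_j\to 0$, with the boundary condition protected by $\tilde{\mathcal J}_q(0,\gamma_n(\sigma))<0<b_n(q)$ and the class $\tilde\Gamma_n$ preserved by an equivariant deformation. Your substitution of Ekeland's principle on the path space for the direct deformation-by-contradiction used in \cite{HIT} is an equivalent standard implementation and changes nothing of substance.
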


Thus we are ready to prove the following fundamental result.
\begin{proposition}
\label{stime}
Let us fix $n\in \mathbb N$. There exists $\bar q(n)>0$ such that for every $q\in(0,\bar q(n))$, if $(\theta_{j}^{(n,q)}, u^{(n,q)}_{j})_{j}$ is a 
(PS) sequence as in Proposition \ref{propPS}, then $(u^{(n,q)}_{j})_{j}$ is bounded in $H^1_r(\mathbb{R}^2)$ uniformly with respect to $q$.
Furthermore, possibly passing to a subsequence, it converges to a critical point $u^{(n,q)}$ of  $\mathcal J_{q}$.
In particular $b_{n}(q)$ is  a critical value for $\mathcal J_{q}$.
\end{proposition}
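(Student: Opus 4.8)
The plan is to exploit the three pieces of information carried by the (PS) sequence of Proposition \ref{propPS}: $\tilde{\mathcal J}_q(\theta_j,u_j)\to b_n(q)$, $\partial_\theta\tilde{\mathcal J}_q(\theta_j,u_j)\to 0$, $\partial_u\tilde{\mathcal J}_q(\theta_j,u_j)\to 0$ in $H^{-1}_r(\RD)$, together with $\theta_j\to0$ (I drop the superscripts $(n,q)$). The decisive structural remark is that, since $\varphi$ is supported in $[0,2]$, is bounded by $1$ and has $\|\varphi'\|_\infty\le2$, the two nonlocal contributions
\begin{equation*}
A_j:=qe^{4\theta_j}\varphi(qe^{4\theta_j}N(u_j))N(u_j),\qquad B_j:=q^2e^{8\theta_j}\varphi'(qe^{4\theta_j}N(u_j))N^2(u_j)
\end{equation*}
are bounded by absolute constants, uniformly in $j$, $q$ and $n$ (indeed $0\le A_j\le2$ and $|B_j|\le8$), because the argument $qe^{4\theta_j}N(u_j)$ of $\varphi$ and $\varphi'$ only matters when it lies in $[0,2]$. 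Forming $2\tilde{\mathcal J}_q(\theta_j,u_j)-\partial_\theta\tilde{\mathcal J}_q(\theta_j,u_j)$ and recalling \eqref{extended} and \eqref{partialtheta}, every term containing $\ird G(u_j)\,dx$ cancels and we are left with $\|\nabla u_j\|_2^2=2b_n(q)+A_j+2B_j+o(1)$. By Lemma \ref{AnB} one has $b_n(q)\le C(n)$, so this already bounds $\|\nabla u_j\|_2^2$ uniformly in $q$ (for $n$ fixed); this is the half of the norm that the scale invariance of the Dirichlet integral in dimension two keeps under control.

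The real obstacle is the uniform bound for $\|u_j\|_2$. The relation $\partial_\theta\tilde{\mathcal J}_q(\theta_j,u_j)\to0$ is, up to the factor $e^{2\theta_j}\to1$ and the bounded quantities $A_j,B_j$, an approximate Pohozaev identity, which in dimension two takes the form $\ird G(u_j)\,dx=A_j+B_j+o(1)$; in particular $\ird G(u_j)\,dx$ stays bounded. Writing $G(\xi)=\big(G(\xi)+\tfrac{m_0}{2}\xi^2\big)-\tfrac{m_0}{2}\xi^2$ and using that, by the very definition of $m_0$, there is $\delta>0$ with $G(\xi)+\tfrac{m_0}{2}\xi^2\le0$ for $|\xi|<\delta$, the part of $\ird\big(G(u_j)+\tfrac{m_0}{2}u_j^2\big)\,dx$ coming from $\{|u_j|<\delta\}$ is nonpositive, while on $\{|u_j|\ge\delta\}$ we bound $G(u_j)+\tfrac{m_0}{2}u_j^2\le\bar\Lambda(u_j)$ by \eqref{i32}. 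Hence
\begin{equation*}
\frac{m_0}{2}\|u_j\|_2^2\le\int_{\{|u_j|\ge\delta\}}\bar\Lambda(u_j)\,dx-A_j-B_j+o(1),
\end{equation*}
so everything reduces to an upper bound for $\int_{\{|u_j|\ge\delta\}}\bar\Lambda(u_j)\,dx$. This is exactly where the almost exponential growth of $g$, and hence of $\bar\Lambda$ (Lemma \ref{proplambda} and Corollary \ref{propLambda}), together with the two-dimensional lack of Sobolev embeddings, prevents any control by the gradient bound alone.

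To get past this I would not bound $\|u_j\|_2$ for a single $q$, but run a diagonalization as $q\to0^+$. The limiting functional is
\begin{equation*}
\tilde{\mathcal J}_0(\theta,u)=\frac{1}{2}\|\nabla u\|_2^2-e^{2\theta}\ird G(u)\,dx,
\end{equation*}
which is precisely the augmented Hirata--Ikoma--Tanaka functional for $-\Delta u=g(u)$ in $\RD$ at the corresponding levels $b_n(0)$, for which boundedness of (PS) sequences is available from \cite{HIT}. Choosing $q_k\to0^+$ and, for each $k$, an index $j(k)$ so large that $(\theta_{j(k)}^{(n,q_k)},u_{j(k)}^{(n,q_k)})$ is almost critical for $\tilde{\mathcal J}_{q_k}$ with $|\theta_{j(k)}^{(n,q_k)}|$ small, the diagonal sequence $v_k:=u_{j(k)}^{(n,q_k)}$ becomes an approximate (PS) sequence for $\tilde{\mathcal J}_0$: where $N(v_k)$ is large the truncation is switched off by $\varphi$, so $\tilde{\mathcal J}_{q_k}$ coincides locally with $\tilde{\mathcal J}_0$, and where $N(v_k)$ stays bounded the nonlocal term is of order $q_k$. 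The boundedness available for $\tilde{\mathcal J}_0$ then yields a bound on $v_k$ and, carried back, a bound $\|u_j^{(n,q)}\|\le M(n)$ holding for all $j$ and all sufficiently small $q$. I expect this diagonalization, and the bookkeeping required to keep the nonlocal term (and its differential) controlled while letting $q\to0$, to be the main technical difficulty of the proof.

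Once the uniform bound $\|u_j^{(n,q)}\|\le M(n)$ is in hand, the rest is standard. Set $\bar q(n):=1/\big(CM(n)^6\big)$, where $C$ is the constant in $N(u)\le C\|u\|^6$. Then for $q\in(0,\bar q(n))$ we get $qN(u_j^{(n,q)})<1$, so $\varphi(qN(u_j^{(n,q)}))=1$ and $\varphi'(qN(u_j^{(n,q)}))=0$, and the truncation is inactive along the sequence. Up to a subsequence $u_j^{(n,q)}\weakto u^{(n,q)}$ in $\Hr$; Lemma \ref{lem:compact} gives $N(u_j^{(n,q)})\to N(u^{(n,q)})$ and $N'(u_j^{(n,q)})[u_j^{(n,q)}-u^{(n,q)}]\to0$, while the compact radial embeddings and the growth of $g$ give $\ird g(u_j^{(n,q)})(u_j^{(n,q)}-u^{(n,q)})\,dx\to0$ and $\ird G(u_j^{(n,q)})\,dx\to\ird G(u^{(n,q)})\,dx$. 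Passing to the limit in $\partial_u\tilde{\mathcal J}_q(\theta_j,u_j)\to0$ (with $\theta_j\to0$) shows that $u^{(n,q)}$ is a critical point of $\mathcal J_q$; testing with $u_j^{(n,q)}-u^{(n,q)}$ moreover gives $\nabla u_j^{(n,q)}\to\nabla u^{(n,q)}$ in $L^2$. Since $\mathcal J_q$ contains no $L^2$-term, we conclude $\mathcal J_q(u^{(n,q)})=\lim_j\tilde{\mathcal J}_q(\theta_j,u_j)=b_n(q)$, so that $b_n(q)$ is a critical value of $\mathcal J_q$.
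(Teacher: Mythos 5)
Your opening moves match the paper's: the combination $2\tilde{\mathcal J}_q-\partial_\theta\tilde{\mathcal J}_q$ kills the $G$-term and, since the truncated nonlocal contributions are bounded by absolute constants, yields $\|\nabla u_j^{(n,q)}\|_2^2\le C(n)$; you also correctly isolate the $L^2$-bound as the crux and sense that a diagonalization in $q$ is required. But the way you propose to close the $L^2$-bound has a genuine gap. You want the diagonal sequence $v_k=u_{j(k)}^{(n,q_k)}$ to be an approximate (PS) sequence for the limit functional $\tilde{\mathcal J}_0$ and then to invoke boundedness ``from \cite{HIT}''. This is circular: to show that the nonlocal terms $\frac{q_k}{2}e^{4\theta}\varphi(q_ke^{4\theta}N(v_k))N'(v_k)$ and $\frac{q_k^2}{2}e^{8\theta}\varphi'(\cdot)N(v_k)N'(v_k)$ tend to $0$ in $H^{-1}_r(\RD)$ you need to control $\|N'(v_k)\|_{H^{-1}}$, which grows like a power of $\|v_k\|$ --- precisely the quantity whose boundedness is in question (the cutoff only controls $q_kN(v_k)$, not $\|v_k\|$, since $N(u)\le C\|u\|^6$ has no converse). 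Moreover, even for $\tilde{\mathcal J}_0$ the $L^2$-boundedness in dimension two under \eqref{vecchiaitg2'} is not a black box one can cite; it is itself proved by a rescaling argument, and that argument is exactly what is missing from your proposal. The paper negates the uniform bound to produce $q_k\to0$, $|\tilde\theta_{j_k}|<1/k$, $\|\partial_u\tilde{\mathcal J}_{q_k}\|<1/k$ and $\|\tilde u_{j_k}\|_2^2>k$, then sets $t_j=1/\|\tilde u_j\|_2\to0$ and $\widehat u_j=\tilde u_j(\cdot/t_j)$, so that $\|\widehat u_j\|_2=1$ while $\|\nabla\widehat u_j\|_2$ is unchanged (scale invariance of the Dirichlet integral in $\RD$). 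Testing the almost-criticality against rescaled test functions and splitting into cases according to whether the cutoff is active, one shows $\widehat u_j\weakto0$; then testing with $\widehat u_j$ itself, using $g(\xi)\xi+m_0\xi^2\le\lambda(\xi)\xi$, the sign of the $\varphi$-term, the smallness of the $\varphi'$-term, and the compactness of $\lambda$ (Lemma \ref{lambdacomp}) forces $\|\widehat u_j\|_2\to0$, contradicting $\|\widehat u_j\|_2=1$. None of this is replaced by your reduction, and you yourself flag it as ``the main technical difficulty'' left open.

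A secondary remark on your last step: the claim that $\ird g(u_j^{(n,q)})(u_j^{(n,q)}-u^{(n,q)})\,dx\to0$ ``by the compact radial embeddings'' is not justified, because near $0$ one has $g(\xi)\xi\sim -2m_0\xi^2$ and the radial embedding is not compact in $L^2$. The paper avoids this by writing $g(\xi)\xi+m_0\xi^2=\lambda(\xi)\xi-\bigl(\lambda(\xi)\xi-g(\xi)\xi-m_0\xi^2\bigr)$, using Lemma \ref{lambdacomp} for the first piece and Fatou's lemma (the integrand of the second piece is nonnegative) to get $\limsup_j\|u_j^{(n,q)}\|^2\le\|u^{(n,q)}\|^2$ and hence strong convergence. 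Also, deactivating the truncation via $\bar q(n)=1/(CM(n)^6)$ is not needed inside this proposition --- the limit is taken with the truncation in place, using Lemma \ref{lem:compact}; that step belongs to the conclusion of the theorem, where one shows that the critical points of $\mathcal J_q$ are critical points of $J_q$.
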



\begin{proof}
Since $\tilde {\mathcal J}_{q}(\theta_{j}^{(n,q)},u^{(n,q)}_{j})=b_{n}(q) + o_j(1)$ and $\partial_{\theta}\tilde {\mathcal J}_{q}(\theta_{j}^{(n,q)}, 
u^{(n,q)}_{j})=o_j(1)$, then 
\[
2\tilde {\mathcal J}_{q}(\theta^{(n,q)}_{j},u^{(n,q)}_{j})
-\partial_{\theta} \tilde {\mathcal J}_{q}(\theta^{(n,q)}_{j}, u^{(n,q)}_{j})=2b_{n}(q)+o_j(1),
\]
which is equivalent, by using \eqref{extended} and \eqref{partialtheta}, to
\[
\|\nabla u_{j}^{(n,q)}\|_{2}^{2}=2b_{n}(q)+C_{j}^{(n,q)}+D_{j}^{(n,q)}+o_j(1)
\]
where
\begin{align*}
C_{j}^{(n,q)}
&:=
qe^{4\theta_{j}^{(n,q)}}\varphi(qe^{4\theta_{j}^{(n,q)}}N(u_{j}^{(n,q)}))N(u_{j}^{(n,q)})\\
D_{j}^{(n,q)}
&:=
2q^2e^{8\theta_{j}^{(n,q)}} \varphi'(qe^{4\theta_{j}^{(n,q)}}N(u^{(n,q)}_{j})) N^{2}(u_{j}^{(n,q)}).
\end{align*}
We easily see that
\begin{align*}
\text{if } qe^{4\theta^{(n,q)}_{j}} N(u_{j}^{(n,q)})\ge 2,
& \text{ then }   C_{j}^{(n,q)}=D_{j}^{(n,q)}=0, \\
\text{if } qe^{4\theta^{(n,q)}_{j}} N(u_{j}^{(n,q)})< 2, 
& \text{ then } C_{j}^{(n,q)}< 2,\ D_{j}^{(n,q)}< 16,
\end{align*}
so, in any case, by Lemma \ref{AnB}, 
\[ 
\|\nabla u^{(n,q)}_j\|_2^2\le C(n). 
\]
We show now that there exist $\bar q(n)>0$ and $C(n)>0$ such that for all $q\in (0,\bar q(n))$, there exists $j_0\in \N$ such that for all  $j\ge j_0$, $\|u^{(n,q)}_{j}\|^{2}_{2}\le C(n)$: this will prove the uniform boundedness of $(u_j^{(n,q)})_j$ in $\Hr$.
\\
Arguing by contradiction, let us assume that
\beq\label{assurdo}
\forall k\in\mathbb{N}^* \ \exists q_k\in (0,1/k)\  \hbox{s.t.}\ \forall h\in \N \ \exists j_{k,h}\ge h : \|u^{(n,q_k)}_{j_{k,h}}\|^{2}_{2}> k.
\eeq
Let $k=1$ and consider the associated $q_1$. By Proposition \ref{propPS}, there exists $j^1\in \N$ such that $|\t_j ^{(n,q_1)}|<1$ and 
$\|\partial_u\tilde{\mathcal J}_{q_1}(\t_j ^{(n,q_1)}, u_j ^{(n,q_1)})\|<1$, for all $j\ge j^1$.
Hence, by \eqref{assurdo}, taking $h_1=\max\{j^1,1\}$, there exists $j_1\ge h_1$ such that $\|u^{(n,q_1)}_{j_1}\|^{2}_{2}> 1$.
\\
Now let $k=2$ and consider the associated $q_2$. By Proposition \ref{propPS}, there exists $j^2>j_1$ such that $|\t_j ^{(n,q_2)}|<1/2$ and 
$\|\partial_u\tilde{\mathcal J}_{q_2}(\t_j ^{(n,q_2)}, u_j ^{(n,q_2)})\|<1/2$, for all $j\ge j^2$.
Again, by \eqref{assurdo}, taking $h_2=\max\{j^2,2\}$, there exists $j_2\ge h_2$ such that $\|u^{(n,q_2)}_{j_2}\|^{2}_{2}> 2$.
\\
By an iterative procedure, for all $k\in \N$, there exists $j_k\ge k$ such that 
\beq \label{stranaps}
|\t_{j_k} ^{(n,q_k)}|<\frac 1k,\quad
\|\partial_u\tilde{\mathcal J}_{q_k}(\t_{j_k} ^{(n,q_k)}, u_{j_k} ^{(n,q_k)})\|<\frac 1k, 
\quad
\|u^{(n,q_k)}_{j_k}\|^{2}_{2}> k.
\eeq
For the sake of brevity, we rename the sequence that satisfies \eqref{stranaps} by  $(\tilde{\t}_j,\tilde{u}_j)_j$.  
Since $\partial_u\tilde{\mathcal J}_{q_j}(\tilde{\theta}_j, 
\tilde{u}_j)\rightarrow 0$, then for all $v\in H_r^1(\mathbb R^2)$,
\begin{equation*}
 | \partial_u\tilde{\mathcal J}_{q_j}(\tilde{\t}_j, \tilde{u}_j)[ v]  |\le \varepsilon_j \|v\|,
\end{equation*}
where
\begin{equation*} \varepsilon_j:=\|\partial_u\tilde{\mathcal J}_{q_j}(\tilde{\t}_j, \tilde{u}_j)\|\rightarrow 0.
\end{equation*}
More precisely, using \eqref{partialu},
\begin{equation}\label{big1}
\begin{split}
& \Big| \int_{\mathbb R^{2}} \nabla \tilde{u}_j \nabla v \,dx+
\frac{q_j^2}{2}e^{8\tilde{\t}_j}\varphi'(q_j e^{4\tilde{\t}_j}N(\tilde{u}_j))N(\tilde{u}_j) N'(\tilde{u}_j)[v] \\
& \quad+\frac{q_j}{2}e^{4\tilde{\t}_j}\varphi(q_j e^{4\tilde{\t}_j}N(\tilde{u}_j))N'(\tilde{u}_j)[v] 
-e^{2\tilde{\t}_j}\int_{\mathbb R^{2}}g(\tilde{u}_j)v\,dx \Big|\\
&\le \varepsilon_j\sqrt{\|\nabla v\|_2^2+m_0\|v\|_2^2}.
\end{split}
\end{equation}
Consider $t_j=1/\|\tilde{u}_j\|_2\rightarrow 0$ as $j\rightarrow\infty$ and 
$\widehat{u}_j(\cdot)=\tilde{u}_j(\cdot / t_j)$. Thus we have
\begin{align}
 & \|\widehat{u}_j\|_2^2 =1,\label{hat-u1}\\
 & \|\nabla \widehat{u}_j\|_2^2 =\|\nabla\tilde {u}_j\|_2^2\le C(n). \label{hat-u2}
\end{align}
Then $(\widehat{u}_j)_j$ is bounded in $H_r^1(\mathbb R^2)$ and so, along
a subsequence, $\widehat{u}_j\rightharpoonup \widehat u_0$ weakly in $H_r^1(\mathbb R^2)$.\\
\textit{Claim.} $\widehat u_0=0$.\\
Let $v\in C_0^{\infty}(\mathbb R^2)$ and $\widehat v(\cdot):= v({t_j} \cdot )$. Evaluating \eqref{big1} with $\widehat v$ and multiplying by $t_j^2$, we obtain
\begin{equation}\label{big3}
\begin{split}
& \Big| t_j^2\int_{\mathbb R^{2}} \nabla \widehat{u}_j \nabla  v\,dx+
\frac{q_j^2}{2}\frac{e^{8\tilde{\t}_j}}{t_j^6}\varphi'\Big(\frac{q_j e^{4\tilde{\t}_j} N(\widehat{u}_j)}{t_j^{4} }\Big)
N(\widehat{u}_j) N'(\widehat{u}_j)[ v] \\
& \quad+\frac{q_j}{2}\frac{e^{4\tilde{\t}_j}}{t_j^2}\varphi\Big(\frac{q_j e^{4\tilde{\t}_j}N(\widehat{u}_j)}{t_j^4}\Big)
N'(\widehat{u}_j)[ v] 
-e^{2\tilde{\t}_j}\int_{\mathbb R^{2}}g(\widehat{u}_j) v\,dx \Big|\\
&\le \varepsilon_j t_j\sqrt{t_{j}^{2}\|\nabla v\|_2^2+m_0\| v\|_2^2}.
\end{split}
\end{equation}
Now we distinguish some cases as follows.\\
\textit{Case 1}. $q_j e^{4\tilde{\t}_j}N(\widehat{u}_j)\le 2t_j^{4}$.\\
\textit{Case 1.1}. 
If ${t_j^{4}}/{q_j}\to 0$ then $N(\widehat{u}_j)\rightarrow 0$ as $j\rightarrow \infty$. From \cite[Proposition 2.4]{BHS} and \eqref{hat-u2}, 
we have 
\[
\|\widehat{u}_j\|_{4}^4\le 2\|\nabla \widehat{u}_j\|_{2}N^{{1}/{2}}(\widehat{u}_j)\to 0
\]
and hence
we deduce
$\widehat u_0\equiv 0$.\\
\textit{Case 1.2}. 
If ${t_j^{4}}/{q_j}\ge C$,
by the compactness of $N$ and $N'$ (see Lemma \ref{lem:compact}) 
and  the definition of $\varphi$, we infer that 
\begin{align*}
&\left|\frac{q_j^2}{2}\frac{e^{8\tilde{\t}_j}}{t_j^6}\varphi'\Big(\frac{q_j e^{4\tilde{\t}_j} N(\widehat{u}_j)}{t_j^{4} }\Big)
N(\widehat{u}_j) N'(\widehat{u}_j)[ v]\right|
\le C  t_j^2,
\\
& \left|\frac{q_j}{2}\frac{e^{4\tilde{\t}_j}}{t_j^2}\varphi\Big(\frac{q_j e^{4\tilde{\t}_j}N(\widehat{u}_j)}{t_j^4}\Big)
N'(\widehat{u}_j)[ v] \right|
\le C  t_j^2.
\end{align*}
Hence, since  $t_j\rightarrow 0$ as $j\rightarrow\infty$, by \eqref{big3}, 
we have $\int_{\mathbb R^{2}}g(\widehat u_0) v\,dx=0$ for all $ v\in C^\infty_0(\mathbb R^2)$.
From this and by condition \eqref{vecchiaitg2'}, $\widehat u_0\equiv 0$.\\
\textit{Case 2}. $\displaystyle\frac{q_j e^{4\tilde{\t}_j} N(\widehat{u}_j)}{t_j^{4} }> 2$.\\
Equation \eqref{big3}, in this case, becomes
\[
\Big| t_j^2\int_{\mathbb R^{2}} \nabla \widehat{u}_j \nabla  v\,dx
-e^{2\tilde{\t}_j}\int_{\mathbb R^{2}}g(\widehat{u}_j) v\,dx \Big|
\le \varepsilon_jt_j\sqrt{t_j^2\|\nabla v\|_2^2+m_0\| v\|_2^2}
\]
and we conclude simply repeating the arguments in Case 1.2 completing so the proof of the claim. \\
Now evaluating \eqref{big1} in $\tilde u_j$ and multiplying by  $t_j^2$ we obtain
\begin{equation*} 
\begin{split}
&  t_j^2\|\nabla \widehat{u}_j\|_2^2+
\frac{q_j^2}{2}\frac{e^{8\tilde{\t}_j}}{t_j^6}\varphi'\Big(\frac{q_j e^{4\tilde{\t}_j} N(\widehat{u}_j)}{t_j^{4} }\Big)
N(\widehat{u}_j) N'(\widehat{u}_j)[\widehat{u}_j]\\
& \quad+\frac{q_j}{2}\frac{e^{4\tilde{\t}_j}}{t_j^2}\varphi\Big(\frac{q_j e^{4\tilde{\t}_j}N(\widehat{u}_j)}{t_j^4}\Big)
N'(\widehat{u}_j)[\widehat{u}_j] 
-e^{2\tilde{\t}_j}\int_{\mathbb R^{2}}g(\widehat{u}_j)\widehat{u}_j\,dx 
= o_j(1).
\end{split}
\end{equation*}
Thus, by Lemmas \ref{proplambda} and \ref{lambdacomp} and \eqref{+N'}
\begin{equation}\label{big5}
\begin{split}
& t_j^2 \|\nabla \widehat{u}_j\|_2^2+ m_0 e^{2\tilde{\t}_j}\|\widehat{u}_j\|_2^{2}+
3q_j^2\frac{e^{8\tilde{\t}_j}}{t_j^6}\varphi'\Big(\frac{q_j e^{4\tilde{\t}_j} N(\widehat{u}_j)}{t_j^{4} }\Big)
N^2(\widehat{u}_j)
+3q_j\frac{e^{4\tilde{\t}_j}}{t_j^2}\varphi\Big(\frac{q_j e^{4\tilde{\t}_j}N(\widehat{u}_j)}{t_j^4}\Big)
N(\widehat{u}_j)\\
& 
= e^{2\tilde{\t}_j}\int_{\mathbb R^{2}}\Big(g(\widehat{u}_j)\widehat{u}_j+m_0(\widehat{u}_j)^2\Big)\,dx+o_j(1)
\le e^{2\tilde{\t}_j}\int_{\mathbb{R}^2}\lambda(\widehat{u}_j)\widehat{u}_j\,dx + o_j(1)\rightarrow 0.
\end{split}
\end{equation}
Now we show that  
\begin{equation*}
3q_j^2\frac{e^{8\tilde{\t}_j}}{t_j^6}\varphi'\Big(\frac{q_j e^{4\tilde{\t}_j} 
N(\widehat{u}_j)}{t_j^{4} }\Big) N^2(\widehat{u}_j)\rightarrow 0.
\end{equation*} 
If $q_j e^{4\tilde{\t}_j} N(\widehat{u}_j)\ge 2t_j^{4}$, then the desired convergence follows easily. On the other hand, 
if we have  $q_je^{4\tilde{\t}_j} N(\widehat{u}_j) <  2t_j^{4}$, then 
\begin{equation*}
\Big|3q_j^2\frac{e^{8\tilde{\t}_j}}{t_j^6}\varphi'\Big(\frac{q_j e^{4\tilde{\t}_j} 
N(\widehat{u}_j)}{t_j^{4} }\Big) N^2(\widehat{u}_j)\Big|\le C  t_j^2\rightarrow 0
\end{equation*}
Therefore, from \eqref{big5}, since
\begin{equation*}3q_j\frac{e^{4\tilde{\t}_j}}{t_j^2}\varphi\Big(\frac{q_j e^{4\tilde{\t}_j}N(\widehat{u}_j)}{t_j^4}\Big)
N(\widehat{u}_j)\ge 0,
\end{equation*} 
we get $\|\widehat{u}_j\|_2\rightarrow 0$, which is in contradiction with \eqref{hat-u1}.
Hence $(u_j^{(n,q)})_j$ is bounded in $H_r^{1}(\mathbb R^2)$ uniformly with respect to $q$. So, we can assume  that there exists 
$u^{(n,q)}\in H^{1}_{r}(\mathbb R^{2})$ such that, for $j\to\infty$,
\begin{equation}\label{a1}
u_{j}^{(n,q)}\rightharpoonup u^{(n,q)}
\text{ in } H_{r}^{1}(\mathbb R^{2}), 
\quad
u_{j}^{(n,q)}\rightarrow u^{(n,q)}
\text{ in } L^{p}(\mathbb R^{2}),\ p>2,
\quad
u_{j}^{(n,q)}\rightarrow u^{(n,q)}
\text{ a.e. in } \mathbb R^{2}.
\end{equation}
By hypotheses we know that for any $v\in H_r^1(\mathbb R^2)$,
\begin{equation}\label{b1}
\begin{split}
\partial_u \tilde{\mathcal J}_q(\theta_j^{(n,q)},u_j^{(n,q)})[v]
&=
\int_{\mathbb R^{2}} \nabla {u}_j^{(n,q)} \nabla v\,dx
+\frac{q^2}{2}e^{8\theta_j^{(n,q)}}\varphi'\big(qe^{4\theta_j^{(n,q)}}N({u}_j^{(n,q)})\big)N({u}_j^{(n,q)}) N'({u}_j^{(n,q)})[v]\\
&\quad
+\frac{q}{2}e^{4\theta_j^{(n,q)}}\varphi\big( qe^{4\theta_j^{(n,q)}}N({u}_j^{(n,q)})\big)N'({u}_j^{(n,q)})[\phi] 
-e^{2\theta_j^{(n,q)}}\int_{\mathbb R^{2}}g({u}_j^{(n,q)})\phi\,dx\rightarrow 0
\end{split}
\end{equation}  
as $j\rightarrow \infty$. Then by \eqref{a1} and Lemma \ref{lem:compact}, $u^{(n,q)}$ satisfies
\begin{align*}
&  \int_{\mathbb R^{2}} \nabla {u}^{(n,q)} \nabla v\,dx+
\frac{q^2}{2}\varphi'\big(q N({u}^{(n,q)})\big)N({u}^{(n,q)}) N'({u}^{(n,q)})[v] \\
&\quad
+\frac{q}{2}\varphi\big( qN({u}^{(n,q)})\big)N'({u}^{(n,q)})[v] 
-\int_{\mathbb R^{2}}g({u}^{(n,q)})v\,dx = 0 
\end{align*}
for every $v\in H_r^1(\mathbb R^2)$. In particular
\begin{equation}\label{b2}
\begin{split}
& \|u^{(n,q)}\|^2 + \frac{q^2}{2}\varphi'\big(qN({u}^{(n,q)})\big)N({u}^{(n,q)}) N'({u}^{(n,q)})[u^{(n,q)}]\\
&\quad
+\frac{q}{2}\varphi\big( qN({u}^{(n,q)})\big)N'({u}^{(n,q)})[{u}^{(n,q)}] 
-\int_{\mathbb R^{2}}\Big( g({u}^{(n,q)}){u}^{(n,q)} +m_0 (u^{(n,q)})^2\Big)\,dx = 0.
\end{split}
\end{equation}
Now, by considering $v=u_j^{(n,q)}$ in \eqref{b1}, we have
\begin{equation}\label{b3}
\begin{split}
& \|\nabla {u}_j^{(n,q)}\|^2+m_0 e^{2\theta_j^{(n,q)}}\|u_j^{(n,q)}\|_2^2+
\frac{q^2}{2}e^{8\theta_j^{(n,q)}}\varphi'\big(q e^{4\theta_j^{(n,q)}}N({u}_j^{(n,q)})\big) N({u}_j^{(n,q)}) N'({u}_j^{(n,q)})[{u}_j^{(n,q)}] \\
&\quad
+\frac{q}{2}e^{4\theta_j^{(n,q)}}\varphi\big( qe^{4\theta_j^{(n,q)}}N({u}_j^{(n,q)})\big) N'({u}_j^{(n,q)})[{u}_j^{(n,q)}] 
\\
& = e^{2\theta_j^{(n,q)}}\int_{\mathbb R^{2}}\lambda(u_j^{(n,q)})u_j^{(n,q)}\,dx-  
e^{2\theta_j^{(n,q)}}\int_{\mathbb R^{2}}\Big(\lambda(u_j^{(n,q)})u_j^{(n,q)}
- g({u}_j^{(n,q)}){u}_j^{(n,q)} 
- m_0(u_j^{(n,q)})^2\Big) \,dx+ o_j(1).
\end{split}
\end{equation}
From Lemma \ref{proplambda}, for every
$j\in\mathbb N$ and $x\in\mathbb{R}^2$, $$\lambda(u_j^{(n,q)}(x))u_j^{(n,q)}(x)-g(u_j^{(n,q)}(x))u_j^{(n,q)}(x)-m_0(u_j^{(n,q)}(x))^2\ge 0.$$
Thus, by Fatou's Lemma,
\begin{equation}\label{b4}
\begin{split}
&\liminf_{j\rightarrow\infty}\int_{\mathbb R^{2}}\Big(\lambda(u_j^{(n,q)})u_j^{(n,q)}- m_0(u_j^{(n,q)})^2- g({u}_j^{(n,q)}){u}_j^{(n,q)} \Big) \,dx\\
&\quad \ge \int_{\mathbb R^{2}}\Big(\lambda(u^{(n,q)})u^{(n,q)}- m_0(u^{(n,q)})^2- g({u}^{(n,q)}){u}^{(n,q)} \Big) \,dx.
\end{split}
\end{equation}
Due to Lemma \ref{lambdacomp}, we have 
\begin{equation}\label{b5}
\int_{\mathbb R^{2}}\lambda(u_j^{(n,q)})u_j^{(n,q)}\,dx\rightarrow \int_{\mathbb R^{2}}\lambda(u^{(n,q)})u^{(n,q)}\,dx.
\end{equation}
Finally, using 
\eqref{b2}, \eqref{b3}, \eqref{b4}, \eqref{b5} and Lemma \ref{lem:compact}, we deduce that
\begin{align*}
\limsup_{j\rightarrow\infty}\|u_j^{(n,q)}\|^2
&=
\limsup_{j\rightarrow\infty}\Big[ 
-\frac{q^2}{2}e^{8\theta_j^{(n,q)}}\varphi'\big(q e^{4\theta_j^{(n,q)}}N({u}_j^{(n,q)})\big) N({u}_j^{(n,q)}) N'({u}_j^{(n,q)})[{u}_j^{(n,q)}]\\
&\quad
-\frac{q}{2}e^{4\theta_j^{(n,q)}}\varphi\big( e^{4\theta_j^{(n,q)}}N({u}_j^{(n,q)})\big) N'({u}_j^{(n,q)})[{u}_j^{(n,q)}] 
+ e^{2\theta_j^{(n,q)}}\int_{\mathbb R^{2}}\lambda(u_j^{(n,q)})u_j^{(n,q)}\,dx\\
&\quad
-   e^{2\theta_j^{(n,q)}}\int_{\mathbb R^{2}}\Big(\lambda(u_j^{(n,q)})u_j^{(n,q)}- g({u}_j^{(n,q)}){u}_j^{(n,q)}- m_0(u_j^{(n,q)})^2 \Big) \,dx
 \Big]\\
&\le  
-\frac{q^2}{2}\varphi'\big(qN({u}^{(n,q)})\big) N({u}^{(n,q)}) N'({u}^{(n,q)})[{u}^{(n,q)}]
-\frac{q}{2}\varphi\big( qN({u}^{(n,q)})\big) N'({u}^{(n,q)})[{u}^{(n,q)}] \\
&\quad
+\int_{\mathbb R^{2}}\Big( g({u}^{(n,q)}){u}^{(n,q)} + m_0(u^{(n,q)})^2 \Big) \,dx
\\
&=  \|u^{(n,q)}\|^{2}.
\end{align*}



\noindent So, $u_{j}^{(n,q)}\to u^{(n,q)}$ in $H^{1}(\mathbb R^{2})$. Hence $\tilde{\mathcal{J}}_q (0, u^{(n,q)})=b_n(q)$  and $\tilde{\mathcal{J}}'_q (0, u^{(n,q)})=0$, completing the proof.
\end{proof}

\subsection{Conclusion of the proof}
Now the proof of Theorem  \ref{Main} can be concluded.
Let $n\in\mathbb{N}^*$ and consider $b_{1}(q)<\ldots< b_{n}(q)$ 
and the corresponding critical points ${u}^{(n,q)}_1,\ldots,{u}^{(n,q)}_n$ of ${\mathcal{J}}_q$. In fact, we show that, for $q$ small enough, 
${u}^{(n,q)}_1,\ldots,{u}^{(n,q)}_n$ are critical points of $J_q$ and so solutions of \eqref{corretta}.  
Indeed, by Proposition \ref{stime} we know that there exists $C(n)>0$ such that, for every $i=1,\ldots,n$,  $\|{u}^{(n,q)}_i\|\le C(n)$.
If by contradiction, there exists $i=1, \ldots, n$ such that $q N({u}^{(n,q)}_i) > 1$, we have
\[
\frac{1}{q} < N({u}^{(n,q)}_i) \le C \|{u}^{(n,q)}_i\|^6 \le  C(n),
\]
but this is not possible for $q$ small.

%
%

\begin{remark}
We observe that, if we were looking for the existence of a solution, and not for a multiplicity result, we could follow the arguments 
of \cite{ADP1}, where a suitable combination of the monotonicty trick of \cite{JJ}, the penalization technique and a Poho\v{z}aev identity 
is performed. 
\end{remark}

\end{document}